\documentclass[12pt]{amsart}
\usepackage[mathscr]{eucal}
\usepackage{amssymb}
\usepackage{latexsym}
\usepackage{amsthm}

\usepackage[pagewise]{lineno}

\theoremstyle{plain}

\newtheorem*{theo}{Theorem}

\newtheorem{lem}{Lemma}[section]
\newtheorem*{cor}{Corollary}

\numberwithin{equation}{section}

\title[Hoffman's characterization theorem of parts]
{On Hoffman's characterization theorem of parts}
\author{Jun-ichi Tanaka}

\medskip

\dedicatory{Dedicated to the memory of Keiji Izuchi}

\address{Department of Mathematics,
School of Education, Waseda University,
Shinjuku, Tokyo 169-8050, Japan}

\email{jtanaka@waseda.jp}

\date{Draft was completed on July 28, 2026}

\keywords{Gleason parts, Hoffman maps, Blaschke products, Interpolating sequences}
\subjclass[2020]{Primary  43A17; Secondary 46J10, 46J15, 28D10.}

\thanks{Partially supported by NSF grant no.\,0649765}

\begin{document}
\maketitle

\begin{abstract}
Let $H^\infty(\Delta)$ be the uniform algebra of bounded analytic
functions on the open unit disc $\Delta$, and
let $\mathfrak{M}(H^\infty)$ be the maximal ideal space
of $H^\infty(\Delta)$. Applying Wermer's embedding
theorem directly, we investigate the relation between
the analytic structure in $\mathfrak{M}(H^\infty)$ and
certain separability conditions. Our method rests only on the
corona theorem and certain properties of analytic discs.  Among other
things, without deep factorization theorems on Blaschke products, we derive
the famous Hoffman theorem: Let $P(\xi)$ be the Gleason part of $\xi$ in
$\mathfrak{M}(H^\infty)$. Then $P(\xi)$ is an analytic disc if and only if
$\xi$ lies in the closure in $\mathfrak{M}(H^\infty)$ of an interpolating
sequence in $\Delta$.
\end{abstract}

\bigskip
\section{Introduction}
The theory on Gleason parts in $\mathfrak{M}(H^\infty)$,
due to K. Hoffman \cite{H3}, reveals relations
between the analytic structure and the behavior of
functions in $H^\infty(\Delta)$ around the boundary
of $\Delta$. Especially, by introducing Hoffman
maps, he characterized nontrivial
Gleason parts in $\mathfrak{M}(H^\infty)$ by interpolating sequences in $\Delta$. This result is so fundamental that many works on $\mathfrak{M}(H^\infty)$
have been exploited based on it (see, for instance, \cite{Go}, \cite{Hoe} and \cite{I}).
We identify the unit circle $\mathbf{T}$ with $[0,2\pi)$ and write $f(\theta)$ for $f(e^{i\theta})$, whenever $f$ is
a function on $\mathbf{T}$. Denote by $dm(\theta) = d\theta /2\pi$ the normalized Lebesgue measure on $\mathbf{T}$. Each function in $H^\infty(\Delta)$ is identified with its Gelfand transform. Then $H^\infty(\Delta)$ is a logmodular algebra
on the Shilov boundary $X = \mathfrak{M}(L^\infty)$,
the maximal ideal space of $L^\infty(\mathbf{T})$. Since $L^\infty(\mathbf{T})$ is identified with $C(X)$, $m$ is
lifted to a measure $\widehat{m}$ on the totally disconnected space $X$, each measurable set $E$ in $\mathbf{T}$ is represented as an open closed subset $U_E$ of $X$.

Let $\phi$ be a homomorphism in $\mathfrak{M}(H^\infty)$,
and let $\mu$ be the representing measure on $X$ for $\phi$.
For $1\le p <\infty,$ we denote by $H^p(\mu)$ the closure of $H^\infty(\Delta)$ in $L^p(\mu)$ and by $H^p_0(\mu)$
the space of all $f$ in $H^p(\mu)$ with $\phi(f)=0$.
The space $H^\infty(\mu)$ is defined to be the
weak*-closure of $H^\infty(\Delta)$ in $L^\infty(\mu)$,
which satisfies $H^\infty(\mu) = H^p(\mu) \cap
L^\infty(\mu)$. From now on, we write $B(\Delta)$ for $H^\infty(\Delta)$ to distinguish $H^\infty(\mu)$
from $H^\infty(\mu)$. Recall that a function $q$ in
$H^\infty(\mu)$ is \textit{inner} if $|q| = 1$
for $\mu -a.e.\,$, and a function $h$ in $H^1(\mu)$
is \textit{outer} if it satisfies
\begin{equation*}
\int_X\, \log \left|\,h\, \right| \,d\mu
\;=\; \log \,\left|\int_X \,h \,d\mu\,\right|
\;>\; -\infty.
\end{equation*}
These functions provide a factorization for each function
in $H^1(\mu)$ into inner and outer factors. For an $f$
in $H^1(\mu)$, let $w_n = \min(1, n/|f|)$. Since
$\log w_n$ is in $L^1(\mu)$, there is an outer function
$h_n$ such that $w_n=|h_n|$. As we set $g_n = h_n f$, the sequence $\{g_n\}$ in $H^\infty(\mu)$ satisfies that
$|g_n| \leq |f|$ and $\lim_{n\to \infty}\,
|| g_n - f||_ {L^1(\mu)} \,=\, 0$.
Moreover , $L^2(\mu)$ is a Hilbert space by the inner product
\begin{equation*}
\langle f,g \rangle \;=\; \int_X\,f\,\overline{g}\,d\mu
\qquad \text{for} \qquad f, g \in L^2(\mu).
\end{equation*}
Then $L^2(\mu) = H^2(\mu)\oplus \overline{H^2_0(\mu)}$.
A closed subspace $S$ of $H^2(\mu)$ is an \textit{invariant} subspace if the subspace $B(\Delta)\cdot S$ is contained
in $S$, where $B(\Delta)\cdot S$ may be replaced by $H^\infty(\mu)\cdot S$.

\medskip
For $\eta$ and $\xi$ in $\mathfrak{M}(H^\infty)$, the
\textit{pseudo-hyperbolic distance} $\rho(\eta,\xi)$ between
$\eta$ and $\xi$ is defined to be
\begin{equation}
\label{eq1.1}
\rho(\eta,\xi)\;=\; \sup\left\{\,\vert f(\eta)\vert \,;\;
f \in B(\Delta), \,f(\xi)=0 \; and \; \;
\Vert f \Vert_{\infty} \,\leq\, 1 \;\right\}.
\end{equation}
Then the relation $\rho(\eta,\xi)< 1$ is an equivalence relation in $\mathfrak{M}(H^\infty)$, and
the equivalence class $P(\xi) = \{ \eta \in \mathfrak{M}(H^\infty)\,; \,\rho(\eta,\xi)< 1 \}$ is called the \textit{Gleason part} of $\xi$.
A Gleason part $P$ is an \textit{analytic disc} if there
exists a continuous,
bijective map $L$ of $\Delta$ onto $P$ such that
$f\circ L$ is analytic on $\Delta$ for all $f$ in
$B(\Delta)$, and such a map is called an \textit{analytic map}.
Since $B(\Delta)$ is a logmodular algebra on $X$, it follows
from Wermer's embedding theorem that
each part is either a single point or an analytic disc.
Whenever $P$ is an analytic disc
in $\mathfrak{M}(H^\infty)\setminus \Delta$, the closure of
$P$ in $\mathfrak{M}(H^\infty)$ never meets the
Shilov boundary $X$, because of the existence of a Blaschke product vanishing identically on $P$ (see \cite[102p]{H3}).

\medskip
Furthermore, K. Hoffman \cite{H3} characterized analytic discs in $\mathfrak{M}(H^\infty)$ by using certain
sequences in $\Delta$.
Recall that a sequence $\{z_j\}$ in $\Delta$ is an
\textit{interpolating sequence} if,
for any bounded sequence $\{w_j\}$, there exists
a function $f$ in $H^\infty(\Delta)$ such that
$f(z_j)=w_j$ for $j = 1, 2, \cdots.$
Such a sequence is characterized by the condition
\begin{equation*}
\inf_{k} \prod_{j:j\not= k} \left \vert
\frac{z_j - z_k}{1 - \overline{z}_k z_j} \right \vert
\;=\; \delta\, > 0\;.
\end{equation*}
Especially, an interpolating sequence $\{z_j\}$ is said to
be \textit{thin} (or \textit{sparse}), if it satisfies
\begin{equation*}
\lim_{k\to \infty} \prod_{j:j \not= k} \left \vert
\frac{z_j - z_k}{1 - \overline{z}_k z_j}  \right \vert
\;=\;1\,.
\end{equation*}
It is known that if $\phi$ is in the closure of an thin
interpolating sequence in $\mathfrak{M}(H^\infty)$, then
the part $P(\phi)$ is homeomorphic to $\Delta$ (see
Lemma \ref{lem3.2}).
For an interpolating sequence $\{z_j\}$, the associated
Blaschke product
\begin{equation}
\label{eq1.2}
B(z)\;=\; \prod_{j= 1}^\infty \frac{\overline{z}_j}{\vert z_j\vert} \, \frac{z_j - z}{1 - \overline{z}_j z}\;,
\end{equation}
is called the \textit{interpolating\/} Blaschke product
(where $\overline{z}_j /\vert z_j\vert=-1$, if $z_j=0$\,).
Notice that a Blaschke product $B(0) > 0$, if $B(0)
\not= 0$. We sometimes use inner functions of the form $e^{i\gamma}B(z)$ for a unimodular constant $e^{i\gamma}$.

\medskip
The set $\mathfrak{M}(H^\infty)^{\Delta}$ of all maps of $\Delta$ into
$\mathfrak{M}(H^\infty)$ is a compact Hausdorff space in the product
topology. Observe that, in this topology, a net $(F_\beta)$ has limit
$F$ if and only if $\lim_\beta f\circ F_\beta(\zeta)
= f\circ F(\zeta)$ for all $f$ in $B(\Delta)$
and all $\zeta$ in $\Delta$. For an element $w$ in $\Delta$,
we put
\begin{equation}
\label{eq1.3}
L_{w}(\zeta)\;=\;
\frac{\zeta+w}{1+\overline{w}\zeta}\,,\qquad
\zeta \in \Delta\,,
\end{equation}
which is an analytic map of $\Delta$ onto the part
$\Delta$ in $\mathfrak{M}(H^\infty)$.

\medskip
Suppose that $\phi$ lies in $\mathfrak{M}(H^\infty)
\setminus \Delta$. When $\alpha$ is in ${\mathbf{T}}$,
the {\it fiber\/} $\mathfrak{M}_{\alpha}$ of
$\mathfrak{M}(H^\infty)$ over $\alpha$ is defined to be
\begin{equation*}
\mathfrak{M}_{\alpha}\;=\;\left\{\xi\in
\mathfrak{M}(H^\infty) \;; \; \xi(z)\,=\,\alpha \right\},
\end{equation*}
where $z$ is the coordinate function. We then have the
decomposition
\begin{equation*}
\mathfrak{M}(H^\infty)\setminus \Delta\;=\;
\bigcup_{\vert\alpha\vert = 1}\,\mathfrak{M}_{\alpha}.
\end{equation*}
Since each $\mathfrak{M}_{\alpha}$ is a peak set with
peaking function $(1+\bar{\alpha}z)/2$, if $\xi$ is
in $\mathfrak{M}_{\alpha}$, then its Gleason part
$P(\xi)$ is contained in $\mathfrak{M}_{\alpha}$.
Since various fibers are homeomorphic to one another,
we restrict our attention to the fiber $\mathfrak{M}_1$
over $z=1$ to look into the structure of fringe
$\mathfrak{M}(H^\infty)\setminus \Delta$. On the other
hand, the corona theorem assures the existence of a net
$(w_\alpha)$ converging to $\phi$, by identify $w$
in $\Delta$ with the homomorphism `` evaluation at
$w$.''  Taking a finer subnet, we may assume the net
$(L_{w_\alpha})$ converges
to $L$ in $\mathfrak{M}(H^\infty)^{\Delta}$ with
$L(0) = \phi$. The map $L\,=\,\lim_\alpha L_{w_\alpha}$ is called the \textit{Hoffman map} induced by $\{w_\alpha\}$,
which is an analytic map of $\Delta$ to the Gleason part
$P(\phi)$ of $\phi$. We note here that if $f$ lies in
$B(X)$, then $\lim_\alpha f\circ L_{w_\alpha} = f\circ L$ uniformly on compact subsets of $\Delta$. Otherwise, for an $\varepsilon_0 > 0 $ and for a compact subset $K$ of $\Delta$,
there is a subnet $(w_\beta)$ of $(w_\alpha)$ such that
\begin{equation*}
\max_K \; \vert \,f\circ L_{w_\beta}(\zeta) -
f\circ L(\zeta) \vert\;\geq\; \varepsilon_0,
\end{equation*}
from which we have a contradiction, because $\{f\circ L_{w_\beta}\}$ forms a normal family.
In what follows, we sometimes consider the Hoffman map
induced by a converging subnet $(w_\alpha)$ of an
interpolating sequence, for which $(L_{w_\alpha})$ converges
to $L$ automatically (see \cite[Chapter X, Lemma 1.3]{Ga}).
Such Hoffman maps play an important role to investigate analytic discs in $\mathfrak{M}(H^\infty)$.

\begin{theo}
Let $\phi$ be a homomorphism in $\mathfrak{M}(H^\infty)
\setminus \Delta$ with representing measure $\mu$ on
$X$, and let $(w_\alpha), L=\lim_\alpha L_{w_\alpha}$
and $P(\phi)$ be as above. Then the following properties
are equivalent:

\begin{enumerate}
\renewcommand{\labelenumi}{(\alph{enumi})}

\medskip
\item $P(\phi)$ is an analytic disc, that is,
  a nontrivial Gleason part in
  $\mathfrak{M}(H^\infty)$.

\medskip
  \item We may choose a sequence $\{\phi^{(m)}\}$
  of homomorphisms in $\mathfrak{M}(H^\infty)$
  such that each $\phi^{(m)}$ lies in the closure
  of thin interpolating sequence $\{w_i^m\}$ with
  the following property: The above $L$ is
  an analytic map of $\Delta$ onto $P(\phi)$
  satisfying that, for all $f$ in $B(\Delta)$,
  \begin{equation*}
  \lim_{m\to \infty}\,f\circ L^{(m)}(\zeta) =
  f\circ L(\zeta)
  \end{equation*}
  uniformly on compact subsets of $\Delta$, where
  $L^{(m)}$ denotes the Hoffman map induced by a
  subnet $(w_\alpha^m)$ of $\{w_i^m\}$ converging
  to $\phi^{(m)}$.  Succinctly, $P(\phi)$ is
  approximated by a sequence $P(\phi^{(m)})$ of
  homeomorphic parts in $\mathfrak{M}(H^\infty)$.

\medskip
  \item There exists an inner function $F$ in
  $H^2(\mu)$ such that $\int_X F \,d\mu = 0\,$
  with the following property: Let $\mathcal{H} =
  \mathcal{H}(F)$ be the closed subspace generated
  by $\{F^{n}; \;n=0, 1, 2, \cdots\}$ in $H^2(\mu)$,
  and let
  $\mathcal{S} = \mathcal{S}(F)$ be the complementary subspace of $\mathcal{H}$ in $H^2(\mu)$, that is,
  $H^2(\mu) = \mathcal{H}\oplus\mathcal{S}$. Then
  $\mathcal{S}$ is an invariant subspace of $H^2(\mu)$.  In this case, if $h$ is in $S$, then
  \begin{equation*}
  \int_X \, h \,\frac{1-|\zeta|^2}{|F -\zeta|^2}
  \,d\mu \;=\; 0\,,
  \end{equation*}
  for all $\zeta$ in $\Delta$ (compare with
  Lemma \ref{lem2.2}).

\medskip
  \item There exists an interpolating sequence
  $\{c_i\}$ in $\Delta$ such that, for a suitable
  subnet $(c_\alpha)$ of $\{c_i\}$, the Hoffman map $L=\lim_\alpha L_{c_\alpha}$ with $L(0) = \phi$
  satisfies that $P(\phi)=L(\Delta)$.
\end{enumerate}
\end{theo}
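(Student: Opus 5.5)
The plan is to prove the cycle (a)$\Rightarrow$(c)$\Rightarrow$(b)$\Rightarrow$(d)$\Rightarrow$(a), using only Wermer's embedding theorem, the corona theorem (through Hoffman maps), normal-family arguments and Lemma \ref{lem3.2} on thin interpolating sequences.

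The easy implications come first. For (d)$\Rightarrow$(a): if $P(\phi)=L(\Delta)$ with $L$ induced by an interpolating sequence, then the interpolating Blaschke product $B$ of (\ref{eq1.2}) satisfies $B\circ L(0)=\phi(B)=0$. Moreover $(B\circ L)'(0)=\lim_\alpha (1-|c_\alpha|^2)|B'(c_\alpha)|\ge\delta>0$, so $B\circ L$ is nonconstant and $P(\phi)$ is not a point. Wermer's theorem then makes $P(\phi)$ an analytic disc. For (b)$\Rightarrow$(d), I would diagonalize. Each thin sequence $\{w_i^m\}$ has a tail with separation constant $\ge 1-1/m$, so from the first $m$ points on compact pieces one can extract points $c_m=w^{m}_{i_m}$ lying hyperbolically close to the approximating data. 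Using the locally uniform convergence in (b), I would arrange that $\{c_m\}$ is itself interpolating (sparse, $|c_m|\to1$ fast) and that a subnet of $\{c_m\}$ converges to $\phi$ with $L_{c_\alpha}\to L$. Since $L$ maps onto $P(\phi)$ by (b), (d) follows.

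For (a)$\Rightarrow$(c): by Wermer's embedding theorem there is an analytic bijection $\tau:\Delta\to P(\phi)$ with $\tau(0)=\phi$, and some $F\in B(\Delta)$ (Gelfand transform) such that $F\circ\tau$ is the identity. Restricted to $X$, this $F$ is inner in $H^\infty(\mu)$ with $\int F\,d\mu=\phi(F)=0$. The representing measures $\mu_\zeta$ of $\tau(\zeta)$ are the unique ones, and logmodularity gives $d\mu_\zeta=\frac{1-|\zeta|^2}{|F-\zeta|^2}d\mu$. The identity in (c) then states exactly that $h\perp$ these kernels. Since the kernels expand in the powers $F^n$ and $\overline{F}^n$, this orthogonality is equivalent to $h\perp F^n$ for all $n$ when $h\in H^2(\mu)$, i.e.\ $h\in\mathcal S$. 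Invariance of $\mathcal S$ then amounts to $f\cdot\mathcal S\perp F^n$ for $f\in B(\Delta)$. Write $f=\sum a_kF^k + g$ with $g\in\mathcal S$, where $f\circ\tau(\zeta)=\sum a_k\zeta^k$. The part $\sum a_kF^k$ preserves $\mathcal S$, because $\mathcal S$ is $F$-invariant: $\langle Fh,F^n\rangle=\langle h,F^{n-1}\rangle$. It remains to show that $g\mathcal S\perp\mathcal H$. This holds because $g$ vanishes on the whole disc $P(\phi)$, so $g\,d\mu_\zeta$ annihilates $H^2$ in the appropriate sense, and one uses $\int gh\,d\mu_\zeta=\tau(\zeta)(gh)=0$ after approximating $h$ by $B(\Delta)$ elements.

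For (c)$\Rightarrow$(b) I would reverse this. From $F$ build the measures $\mu_\zeta$ above; invariance of $\mathcal S$ shows they are multiplicative on $B(\Delta)$, giving a nontrivial part. Then Hoffman's approximation applies: choose $w_\alpha$ approximating $\phi$ and, for each $m$, use the corona theorem together with the nonvanishing derivative of $F\circ L$ to select a thin interpolating sequence $\{w_i^m\}$ of points where $F\circ L_{w}$ is close to univalent on $\{|\zeta|\le 1-1/m\}$. Lemma \ref{lem3.2} then makes each $P(\phi^{(m)})$ homeomorphic to $\Delta$, and normal families give the locally uniform convergence.

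The main obstacle is this last step, in both directions: producing thin interpolating sequences, i.e.\ points $w$ with $\prod_{j\ne k}$ near $1$, without the Hoffman factorization theorems. My first attempt would be to exploit the univalence of $F\circ L$ near $0$ via Hurwitz's theorem applied to $F\circ L_{w_\alpha}$. This yields, for $\alpha$ large, a single zero of $F-F(w_\alpha)$ in each hyperbolic disc, and hence a separated sequence whose separation tends to $1$ if the discs are chosen far apart.
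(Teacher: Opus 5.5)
Your arguments for (d)$\Rightarrow$(a) and for (a)$\Leftrightarrow$(c) are essentially the paper's: the measures $\frac{1-|\zeta|^2}{|F-\zeta|^2}\,d\mu$, orthogonality to the $F^n$, and multiplicativity of $\Phi_\zeta$. There is one misstatement. Wermer's theorem yields an inner $Z\in H^\infty(\mu)$, not an $F\in B(\Delta)$ with $F\circ\tau=\mathrm{id}$. Such an $F$ would make $\tau^{-1}=F|_{P(\phi)}$ continuous, so it can exist only when $P(\phi)$ is a homeomorphic disc (as in Lemma~\ref{lem3.2}). For (a)$\Rightarrow$(c) this is harmless (take $F=Z$). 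In (c)$\Rightarrow$(b), however, you use $F\circ L_w$ for $w\in\Delta$, which is undefined for an $F$ that lives only in $H^2(\mu)$. You would have to pass to approximants in $B(\Delta)$, such as the $q_k$ of Lemma~\ref{lem2.1}.

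The genuine gap is that your (c)$\Rightarrow$(b), which in effect goes through (a), skips both assertions that make up (b). First, (b) says that the \emph{given} $L=\lim_\alpha L_{w_\alpha}$, built from an arbitrary net, maps $\Delta$ \emph{onto} $P(\phi)$, i.e.\ $\widehat Z\circ L(\zeta)=e^{i\alpha}\zeta$. A priori $\widehat Z\circ L$ is just an analytic self-map of $\Delta$ fixing $0$. You never address this (the paper devotes Lemma~\ref{lem3.3} to it), yet you invoke it in (b)$\Rightarrow$(d). Second, ``normal families give the locally uniform convergence'' only produces subsequential limits of each $f\circ L^{(m)}$. It does not identify them with $f\circ L$ for every $f\in B(\Delta)$. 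The paper gets this in two steps. It chooses $\{w_i^m\}$ inside the net so that $q_k\circ L_{w_i^m}\to q_k\circ L$ for $k\le m$ (see \eqref{eq3.2}), whence $q_k\circ L^{(m)}=q_k\circ L$. It then proves a rigidity statement (Lemma~\ref{lem3.4}): any adherent point $L_1$ of $\{L^{(m)}\}$ that agrees with $L$ on all $q_k$ agrees with $L$ on all of $B(\Delta)$. This uses the expansion \eqref{eq2.1} in powers of $\widehat Z$ and $q_k\to Z$.

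You have also misplaced the main obstacle. Thin interpolating sequences are cheap: every sequence with $|z_n|\to1$ has a thin subsequence (Lemma~\ref{lem3.1}, a Hayman--Newman estimate). So no Hurwitz or univalence argument is needed, and pairwise separation tending to $1$ would not by itself give thinness, which concerns the full products. What is hard is controlling the Hoffman maps of the chosen sequences, and this breaks your (b)$\Rightarrow$(d) as well. Choosing one point $c_m=w^m_{i_m}$ per $m$ does not guarantee that $L$ is an adherent point of $\{L_{c_m}\}$ in $\mathfrak{M}(H^\infty)^\Delta$. That topology is not first countable, and matching finitely many functions at each stage (countably many in all) says nothing about the remaining $f\in B(\Delta)$, unless you also have the rigidity statement above.

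The paper instead keeps an infinite subsequence $\{c^m_i\}$ of each $\{w^m_i\}$. This is legitimate because \eqref{eq3.2}, and hence the conclusion of Lemma~\ref{lem3.4}, survives passage to subsequences. Each resulting $L^{(m)}$ then lies in the closure of $\{L_{c_i}\}$, and therefore so does $L=\lim_m L^{(m)}$. The union $\{c_i\}$ is made interpolating by the diagonal array of Lemma~\ref{lem3.5}. There the heights in the upper half-plane decrease geometrically, so separation and the Carleson condition are immediate.
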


\medskip
The equivalence (a) and (d) is a strong version of
Hoffman's theorem :

\begin{cor}
Let $P(\phi)$ be a Gleason part in $\mathfrak{M}
(H^\infty)\setminus \Delta$. Then $P(\phi)$ is an
analytic disc if and only if $\phi$ lies in the
closure in $\mathfrak{M}(H^\infty)$ of an
interpolating sequence in $\Delta$.
\end{cor}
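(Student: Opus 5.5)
The statement to prove is the Corollary: the equivalence of (a) and (d) in the Theorem, restated in terms of closures of interpolating sequences. I take the Theorem as established and prove the two directions separately, moving between "$\phi$ lies in the closure of an interpolating sequence" and "a Hoffman map along a subnet of that sequence has image $P(\phi)$".

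\emph{Necessity.} Suppose $P(\phi)$ is an analytic disc. Then (a) of the Theorem holds. By the implication (a)$\Rightarrow$(d), there is an interpolating sequence $\{c_i\}$ and a subnet $(c_\alpha)$ with $L=\lim_\alpha L_{c_\alpha}$, $L(0)=\phi$ and $P(\phi)=L(\Delta)$. Evaluating at $\zeta=0$, we have $L_{c_\alpha}(0)=c_\alpha$. By the definition of the product topology on $\mathfrak{M}(H^\infty)^{\Delta}$, $c_\alpha\to L(0)=\phi$ in $\mathfrak{M}(H^\infty)$. Hence $\phi$ lies in the closure of $\{c_i\}$.

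\emph{Sufficiency.} Suppose $\phi\in\overline{\{z_j\}}$, where $\{z_j\}$ is interpolating with constant $\delta>0$. Since $\phi\notin\Delta$, we can choose a subnet $(z_\alpha)$ of $\{z_j\}$ with $z_\alpha\to\phi$. As recalled before the Theorem (\cite[Chapter X, Lemma 1.3]{Ga}), $(L_{z_\alpha})$ then converges to a Hoffman map $L$ with $L(0)=\phi$. The point is to show that $L$ is nonconstant, since its image then lies in $P(\phi)$ and $P(\phi)$ is not a single point. Let $B$ be the interpolating Blaschke product (\ref{eq1.2}) of $\{z_j\}$. Then $B\circ L_{z_k}(0)=B(z_k)=0$, and
\begin{equation*}
\bigl|(B\circ L_{z_k})'(0)\bigr| \;=\; (1-|z_k|^2)\,|B'(z_k)| \;=\; \prod_{j\ne k}\left|\frac{z_j-z_k}{1-\overline{z}_k z_j}\right| \;\ge\; \delta .
\end{equation*}
Since $B\circ L_{z_\alpha}\to B\circ L$ uniformly on compact sets, the derivatives converge as well. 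So $|(B\circ L)'(0)|\ge\delta>0$, and $B\circ L$ is nonconstant. Hence $L(\Delta)\subset P(\phi)$ contains points other than $\phi$. By Wermer's embedding theorem, as recalled in the introduction, every part is either a point or an analytic disc, so $P(\phi)$ is an analytic disc.

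The only delicate point is the existence of the limit Hoffman map along a subnet of an interpolating sequence, which is quoted from \cite{Ga}. Everything else follows directly from the Theorem, (a)$\Leftrightarrow$(d), and the derivative estimate above. Sufficiency could also be obtained from (d)$\Rightarrow$(a), but the direct argument shows that the sufficiency half does not depend on the rest of the Theorem.
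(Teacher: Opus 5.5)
Your proposal is correct and follows the paper's route: the Corollary is read off from the equivalence (a)$\Leftrightarrow$(d) of the Theorem, with necessity obtained by evaluating the Hoffman map at $\zeta=0$, so that $c_\alpha=L_{c_\alpha}(0)\to L(0)=\phi$. Your sufficiency argument is exactly the paper's proof of (d)$\Rightarrow$(a) via $|(B\circ L)'(0)|\ge\delta>0$; as you note, that proof never uses $P(\phi)=L(\Delta)$, so it applies to any $\phi$ in the closure of an interpolating sequence.
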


\medskip
In the next section, we establish some notation and elementary facts derived from Wermer's embedding theorem. In Section 3,
among other things, Hoffman maps are discussed by several lemmas in relation to interpolating sequences, and the proof
of Theorem is also provided in its last part. We close with two remarks in Section 4.

\medskip
We refer the reader to \cite{H3} and \cite[Chapter X]{Ga}
for further details and recent developments on the analytic structure of $\mathfrak{M}(H^\infty)$. Related results concerning the Hardy space theory can be found in
\cite{G1}, \cite{Ga} and \cite{H1}.


\bigskip
\section{Embedding theorem and approximation}
\label{S2}

We recall certain properties of inner functions appearing
in Wermer's embedding theorem. Let $\phi$ be a homomorphism
in $\mathfrak{M}(H^\infty)$, and let $\mu$ be the
representing measure for $\phi$ on $X$. Since
$B(\Delta)=H^\infty(\Delta)$ is a logmodular algebra on
$X$, $\mu$ is determined uniquely, from which we know
that each Gleason part is either a single point or an
analytic disc. Suppose that the Gleason part $P(\phi)$
of $\phi$ is an analytic disc. Then each $\xi$ in $P(\phi)$
has a representing measure $\mu_\xi$ which is mutually
absolutely continuous with respect to $\mu$. Furthmore,
the Radon-Nikodym derivatives $d\mu /d\mu_{\xi}$ and
$d\mu_{\xi} /d\mu$ are essentially bounded. Then Wermer's
embedding theorem assures the existence of an inner
function $Z$ in $H^\infty(\mu)$ such that its extension
$\widehat{Z}(\xi) \,= \, \int_X\,Z\,d\mu_\xi$ to
$P(\phi)$ is a bijective map of $P(\phi)$ to $\Delta$.
Then $\tau(\zeta)= \widehat{Z}^{-1}(\zeta)$ is an analytic
map on $\Delta$ with $\tau(0) =\phi$. We notice that $Z$
is determined uniquely up to multiplication by unimodular
constants. Moreover, for each $f$ in $B(\Delta)$, $f$ is
represented as
\begin{equation}
\label{eq2.1}
f(\xi)\;=\;\sum_{n=0}^{\infty}\, a_n \widehat{Z}(\xi)^n\;,
\qquad \xi \in P(\phi)\,,
\end{equation}
where $a_n = \langle f,Z^{\,n}\rangle$, the inner
product of $f$ and $Z^n$ in $L^2(\mu)$ (see,
for example, \cite[Chapter 6, \S 6.4]{L}\,). Since
$\widehat{Z}\circ\tau(\zeta)=\zeta$, we observe that
$f\circ\tau(\zeta)\,=\,\sum_{n=0}^{\infty}\, a_n \zeta^n$.
Since $f\circ\tau$ is a bounded analytic function on
$\Delta$. the sequence $\{a_n\}$ of its Taylor coefficients
lies in $\ell^2 = \ell^2(0, 1, \ldots)$. Observe also that
$a_0=(f\circ\tau)(0)$ and $a_1=(f\circ\tau)'(0)$. However,
since the orthonormal system $\{Z^{n}; \;n=0,\pm1, \pm2,
\cdots\}$ may not be complete in $L^2(\mu)$, we only have
the inequality
\begin{equation*}
|| f ||^2_{L^2(\mu)}
\;\geq \; \sum_{n=0}^{\infty}\, |a_n|^2.
\end{equation*}
Indeed, as we mentioned earlier, if $P(\phi)$ is contained
in $\mathfrak{M}(H^\infty)\setminus \Delta$, then there is
a Blaschke product $B$ vanishing identically on $P(\phi)$.
This shows that $B$ is orthogonal to the above system
$\{Z^{n}\}$ in $L^2(\mu)$. We use repeatedly the above
notation in forthcoming sections.

\medskip
The next lemma is a variant of the Hoffman-Wermer theorem
(see \cite[Chapter II, Theorem 7.2]{G1} or \cite[Chapter 6,
Theorem 26 ]{L}\,):

\begin{lem}
\label{lem2.1}
Under the above notation, there is a sequence $\{q_n\}$
in $B(\Delta)$ with $q_n(\phi) = q_n\circ\tau(0) = 0$
such that $\Vert q_n\Vert_\infty \leq 1$ and
$q_n \to Z $ for $\mu-a.e.\,$ on $X$. Moreover, we
may assume that $(q_n\circ\tau)'(0) > 0$.
\end{lem}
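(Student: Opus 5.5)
The plan is to use that $B(\Delta)$ is logmodular on $X$, so that $\log|B(\Delta)^{-1}|$ is uniformly dense in $L^\infty_{\mathbb R}(X)$ (equivalently, $\mathrm{Re}\,B(\Delta)$ is weak*-dense in $L^\infty_{\mathbb R}(\mu)$). Following the Hoffman--Wermer argument, I will first approximate the inner function $Z$ in $L^2(\mu)$ by functions of $B(\Delta)$ of norm at most one, and then correct them so that they vanish at $\phi$. Concretely, since $Z\in H^\infty(\mu)$ is the weak*-limit of elements of $B(\Delta)$, and $H^\infty(\mu)\subset H^2(\mu)$ is the $L^2(\mu)$-closure of $B(\Delta)$, there are $g_n\in B(\Delta)$ with $g_n\to Z$ in $L^2(\mu)$. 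The difficulty is that $\|g_n\|_\infty$ need not be bounded.

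The first step is to fix this with outer (invertible) multipliers. Set $w_n=\min(1,1/|g_n|)$. By logmodularity there are invertible $h_n\in B(\Delta)$ with $|h_n|$ uniformly close to $w_n$ on $X$, say $|h_n|\le w_n$ and $\log|h_n|\ge \log w_n-1/n$. Normalize $h_n$ by a unimodular constant so that $h_n(\phi)>0$. Then $p_n=h_ng_n$ satisfies $\|p_n\|_\infty\le 1$.

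Next I estimate $\|p_n-Z\|_{L^2(\mu)}$. Since $|Z|=1$, we have $w_n\to1$ in $\mu$-measure (because $|g_n|\to1$ in measure). Also $|h_n(\phi)|=\exp\int\log|h_n|\,d\mu$, and $\int\log w_n\,d\mu\to0$, because $\int\log^+|g_n|\,d\mu\le \int|g_n-Z|\,d\mu\to0$ roughly. Hence $h_n(\phi)\to1$. Combined with $|h_n|\le1$, this gives $\|h_n-1\|_{L^2(\mu)}^2\le 2-2\,\mathrm{Re}\,h_n(\phi)\to0$. It follows that $p_n\to Z$ in $L^2(\mu)$, and after passing to a subsequence, $\mu$-a.e. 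Moreover $p_n(\phi)=\int p_n\,d\mu\to\int Z\,d\mu=\widehat Z(\phi)=0$.

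To get exact vanishing at $\phi$, put $q_n=(p_n-p_n(\phi))/(1-\overline{p_n(\phi)}p_n)$. This is a Möbius composition, so $\|q_n\|_\infty\le1$, $q_n(\phi)=0$, and $q_n\to Z$ $\mu$-a.e., since $p_n(\phi)\to0$. This is the main step, and the most delicate point is the estimate $\int\log w_n\,d\mu\to 0$. I would handle it by using $\log^+ t\le |t-1|$ and $|g_n|\le |g_n-Z|+1$ on $X$.

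For the derivative condition, observe that $q_n\circ\tau$ is analytic on $\Delta$ with $q_n\circ\tau(0)=0$. By \eqref{eq2.1}, $(q_n\circ\tau)'(0)=\langle q_n,Z\rangle$. Since $q_n\to Z$ boundedly a.e., dominated convergence gives $\langle q_n,Z\rangle\to\langle Z,Z\rangle=1$. Hence for large $n$ this derivative is nonzero, and multiplying $q_n$ by the unimodular constant $\overline{c_n}/|c_n|$, where $c_n=\langle q_n,Z\rangle$, makes it positive. These constants tend to $1$, so the modified $q_n$ still converge to $Z$ $\mu$-a.e. Discarding finitely many terms completes the proof.
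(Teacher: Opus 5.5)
Your proof is correct and follows essentially the same Hoffman--Wermer scheme as the paper. Both approximate $Z$ in mean by $g_n\in B(\Delta)$, multiply by a factor of modulus at most $\min(1,1/|g_n|)$ whose value at $\phi$ tends to $1$, and finish by rotating with unimodular constants using $(q_n\circ\tau)'(0)=\langle q_n,Z\rangle\to 1$. The paper obtains the factor as $e^{-h_n}$ with $\log^+|g_n|\le \mathrm{Re}\,h_n$ and $\mathrm{Re}\,h_n(\phi)$ small, via the infimum formula for the unique representing measure; you get it from logmodularity together with the Jensen equality for invertibles. The only cosmetic difference is that the paper arranges $g_n(\phi)=0$ at the outset, which multiplication preserves, whereas you restore vanishing at $\phi$ afterwards with a M\"obius map; this works equally well since $p_n(\phi)\to 0$.
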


\begin{proof}\;
Since $H^\infty(\mu)= H^1(\mu)\cap L^\infty(\mu)$, there
is a sequence $\{g_n\}$ in $B(\Delta)$ such that
$\Vert g_n-Z\Vert_{L^1(\mu)} < 1/n^2$. Since $g_n(\phi)
\to Z(\phi) = 0$, we may assume $g_n(\phi) = 0$.
Replacing with its suitable subsequence, we also
assume $g_n \to Z$ for $\mu-a.e.\,$ on $X$.
It suffices to show that there is a sequence
$\{h_n\}$ in $B(\Delta)$ such that $\Vert g_n e^{-h_n} \Vert_\infty \leq 1$ and $e^{-h_n} \to 1$ for $\mu-a.e.$
on $X$. Since $\vert Z\vert = 1$,
$\log^+ \vert g_n\vert \leq \max(0, \vert g_n\vert
-1) \leq \vert g_n-Z\vert$. This shows
\begin{equation*}
\int_X \log^+ \vert g_n\vert \,d\mu \;\leq \;
\int_X \vert g_n-Z\vert \,d\mu \;<\;
\frac{1}{n^2}\,.
\end{equation*}
Since the representing measure $\mu$ for $\phi$
is unique, we have
\begin{equation*}
\int_X \log^+ \vert g_n\vert \,d\mu = \inf\{
\phi(v);\;\; \log^+ \vert g_n\vert
\leq v, \;\; v \in Re \,B(X)\,\}.
\end{equation*}
This shows that there is a function $h_n$ in
$B(X)$ with $Im\,(h_n)(0)=0$ such that
$\log^+ \vert g_n\vert \le Re\,(h_n)$, while
$Re\,\phi(h_n)\le 1/n^2$. By the same way as in
the proof of \cite[Chapter II, Theorem 7.2]{G1},
we see that $q_n=g_n e^{-h_n}$ satisfies the
desired property.

Moreover, since $\Vert q_n-Z\Vert_{L^1(\mu)}
\to 0$, we see that $q_n\circ\tau(\zeta) \to
Z\circ\tau(\zeta) = \zeta$, so $(q_n\circ\tau)'(0)
\to 1 $. Let $\{e^{i\gamma_n}\}$ be the sequence
such that $e^{i\gamma_n}g_n\circ\tau)'(0) > 0$.
by replacing $q_n$ with $e^{i\gamma_n}g_n$,
$q_n$ satisfies the desired property.

\end{proof}

\medskip
Observe that if $z_1$ and $z_2$ lie in $\Delta$, then
Schwarz's lemma shows that the
psudo-hyparbolic distance $\rho(z_1,z_2)$ by
\eqref{eq1.1} is given by
\begin{equation}
\label{eq2.2}
\rho(z_1,z_2)\;=\;\left \vert
\frac{z_1 - z_2}{1 - \overline{z}_2 z_1} \right \vert\,.
\end{equation}

\begin{lem}
\label{lem2.2}
Under the above notation, we obtain the following :

\begin{enumerate}
\renewcommand{\labelenumi}{(\alph{enumi})}

\item
  For $\xi$ in $P(\phi)$, the representing
  measure $\mu_\xi$ has the form
  \begin{equation*}
  d\mu_\xi
  \;=\; \frac{1-|\widehat{Z}(\xi)|^2}
  {|Z -\widehat{Z}(\xi)|^2}\,d\mu
  \;=\; Re\left\{\frac{Z +\widehat{Z}(\xi)}
  {Z -\widehat{Z}(\xi)}\right\}\,d\mu\,.
  \end{equation*}

\item
  For $\xi_1$ and $\xi_2$ in $P(\phi)$, we have
\begin{equation*}
\rho(\xi_1,\xi_2)\;=\;
\rho(\widehat{Z}(\xi_1),\widehat{Z}(\xi_2))\;=\;
\left|\frac{\widehat{Z}(\xi_1)-\widehat{Z}(\xi_2)}
{1- \overline{\widehat{Z}(\xi_2)}\,\widehat{Z}(\xi_1)}
\right |.
\end{equation*}
\end{enumerate}

\end{lem}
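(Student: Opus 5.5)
The plan is to derive both parts from the series representation \eqref{eq2.1} together with the uniqueness of representing measures, which holds because $B(\Delta)$ is logmodular on $X$.

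For (a), fix $\xi\in P(\phi)$ and put $w=\widehat{Z}(\xi)\in\Delta$. Define the measure
\begin{equation*}
d\nu \;=\; \frac{1-|w|^2}{|Z-w|^2}\,d\mu .
\end{equation*}
Since $|Z|=1$ $\mu$-a.e., the identity $\frac{1-|w|^2}{|Z-w|^2}=Re\{(Z+w)/(Z-w)\}$ is the usual Poisson-kernel identity applied pointwise. This density is positive and bounded, and it expands as
\begin{equation*}
\sum_{n\ge0}\overline{w}^nZ^n+\sum_{n\ge1}w^n\overline{Z}^n ,
\end{equation*}
with the series converging uniformly. For $f\in B(\Delta)$ we then compute
\begin{equation*}
\int f\,d\nu \;=\; \sum_{n\ge0} w^n\int f\,\overline{Z}^n\,d\mu \;+\; \sum_{n\ge1}\overline{w}^n\int f\,Z^n\,d\mu .
\end{equation*}
The first sum is $\sum a_n w^n = f(\xi)$ by \eqref{eq2.1}. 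The second sum vanishes because $fZ^n$ lies in $H^\infty(\mu)$ and $Z(\phi)=\int Z\,d\mu=0$. Hence $\int fZ^n\,d\mu=\phi(f)\phi(Z)^n=0$, using that $\phi$ is multiplicative on $H^\infty(\mu)$, i.e.\ on the weak*-closure of $B(\Delta)$. Therefore $\nu$ is a positive representing measure for $\xi$ (take $f=1$ to get total mass $1$). Uniqueness of representing measures then gives $\nu=\mu_\xi$.

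For (b), the plan is a two-sided Schwarz-lemma argument. First the upper bound: take $f\in B(\Delta)$ with $\|f\|_\infty\le1$ and $f(\xi_2)=0$. Then $f\circ\tau$ is analytic on $\Delta$, bounded by $1$, and vanishes at $\widehat{Z}(\xi_2)$. By \eqref{eq2.2}, Schwarz's lemma gives
\begin{equation*}
|f(\xi_1)|=|f\circ\tau(\widehat{Z}(\xi_1))|\le \rho(\widehat{Z}(\xi_1),\widehat{Z}(\xi_2)),
\end{equation*}
so $\rho(\xi_1,\xi_2)$ is at most the right-hand side.

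For the reverse inequality we need functions in $B(\Delta)$ that approximate the Möbius map $(Z-w_2)/(1-\overline{w}_2Z)$, where $w_j=\widehat{Z}(\xi_j)$. Lemma \ref{lem2.1} supplies $q_n\in B(\Delta)$ with $\|q_n\|\le1$ and $q_n\to Z$ $\mu$-a.e. By dominated convergence, using the bounded densities from (a), $q_n(\xi)\to\widehat{Z}(\xi)$ for every $\xi\in P(\phi)$. Set
\begin{equation*}
f_n=\frac{q_n-q_n(\xi_2)}{1-\overline{q_n(\xi_2)}\,q_n}.
\end{equation*}
Then $f_n\in B(\Delta)$, $\|f_n\|\le1$, $f_n(\xi_2)=0$, and $f_n(\xi_1)\to (w_1-w_2)/(1-\overline{w}_2w_1)$. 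This gives the lower bound.

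The main obstacle I expect is justifying in (a) that $\int fZ^n\,d\mu=0$ for $n\ge1$, that is, the multiplicativity of $\phi$ on $H^\infty(\mu)$. I would obtain it by weak*-approximating $Z$ and $f$ using the sequence $q_n$ from Lemma \ref{lem2.1}, with $q_n(\phi)=0$, and passing to the limit via dominated convergence.
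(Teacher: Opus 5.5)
Your proof is correct. Part (b) is essentially the paper's argument: Schwarz--Pick for $f\circ\tau$ gives one inequality, and the M\"obius-normalized approximants $q_n$ from Lemma~\ref{lem2.1} give the other. You might add that $|q_n(\xi_2)|<1$ for large $n$, so $f_n$ is defined.

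Part (a), however, follows a different route from the paper. The paper does no computation there. It quotes a theorem from Gamelin with $F=Z$ and $\lambda=\widehat{Z}(\xi)$, whose key hypothesis is that $(\lambda-Z)H^\infty(\mu)$ has codimension one in $H^\infty(\mu)$. It verifies this from $H^\infty(\mu)=H^\infty(\mu_\xi)$ and the factorization $H^\infty_0(\mu_\xi)=\frac{Z-\lambda}{1-\overline{\lambda}Z}H^\infty(\mu_\xi)$.

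You instead work directly with the Poisson kernel:
\begin{enumerate}
\item expand it in powers of $Z$ and $\overline{Z}$;
\item recognize the analytic half as $\sum a_n\widehat{Z}(\xi)^n=f(\xi)$ by \eqref{eq2.1};
\item kill the other half using $\int fZ^n\,d\mu=\phi(f)\phi(Z)^n=0$;
\item conclude by uniqueness of representing measures.
\end{enumerate}

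The two routes buy different things. Yours is self-contained once \eqref{eq2.1} is granted. It is the same computation the paper itself performs later with $\Phi_\zeta$ in the proof of (c)$\Rightarrow$(a). The paper's version makes explicit the structural input, namely codimension one of $(\lambda-Z)H^\infty(\mu)$. That property fails for a general mean-zero inner function (e.g.\ $z^2$ on the circle, where the Poisson-type functional gives $\frac{1}{2}\bigl(f(\sqrt{\lambda})+f(-\sqrt{\lambda})\bigr)$). In part (c) its role is taken over by the invariance of $\mathcal{S}$.

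Your alternative justification of $\int fZ^n\,d\mu=0$, via $\int f q_k^n\,d\mu=0$ and dominated convergence, is also fine. It is not circular, since Lemma~\ref{lem2.1} does not depend on Lemma~\ref{lem2.2}.
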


\begin{proof}\;
By putting $F=Z$ and $\lambda=\widehat{Z}(\xi)$, the
property (a) follows from \cite[Chapter V, Theorem 7.1]
{G1}, whose proof rests the fact that $(\lambda -
Z)H^\infty(\mu)$ has codimension one in $H^\infty(\mu)$
for each $\lambda$ in $\Delta$. This follows from
$H^\infty(\mu) = H^\infty(\mu_\xi)$ and
\begin{equation*}
H^\infty_0(\mu_\xi) \;=\; \frac{Z-\lambda}{1-\overline{\lambda} Z}\,H^\infty(\mu_\xi) \;=\; (\lambda - Z)H^\infty(\mu).
\end{equation*}

To show the property (b), let $g$ be a function in $B(\Delta)$ such that $||g||_\infty \leq 1$ and $g(\xi_2)=0$. Since $g\circ \tau$ is analytic on $\Delta$ and $g(\xi)=g\circ \tau(\widehat{Z}(\xi))$ on $P(\phi)$,
Schwarz's inequality implies that
\begin{equation*}
\left|\frac{\widehat{Z}(\xi_1)-\widehat{Z}(\xi_2)}
{1-\overline{\widehat{Z}(\xi_2)}\,\widehat{Z}(\xi_1)}
\right | \; \geq \;
\left|\frac{g\circ \tau(\widehat{Z}(\xi_1))-
g\circ \tau(\widehat{Z}(\xi_2))}
{1-\overline{g\circ \tau(\widehat{Z}(\xi_2))}\,g\circ \tau(\widehat{Z}(\xi_1))}\right |\;=\;
\left|g(\xi_1)\right|\,.
\end{equation*}
So we have $\rho(\widehat{Z}(\xi_1),\widehat{Z}(\xi_2)) \,\geq\,\rho(\xi_1,\xi_2)$ by \eqref{eq1.1}.
Conversely, let $\{q_n\}$ with $|q_n| \leq 1$ be
an approximating sequence to $Z$ as in Lemma \ref{lem2.1}. Then
\begin{equation*}
\rho(\xi_1,\xi_2)\;\geq\;
\left|\frac{q_n(\xi_1)-q_n(\xi_2)}
{1-\overline{q_n(\xi_2)}\,q_n(\xi_1)}
\right | \;\rightarrow \;
\left|\frac{\widehat{Z}(\xi_1)-\widehat{Z}(\xi_2)}
{1-\overline{\widehat{Z}(\xi_2)}\,\widehat{Z}(\xi_1)}
\right |\,, \quad \text{as} \quad  n \to \infty\,.
\end{equation*}
This shows that $\rho(\xi_1,\xi_2)\;\geq\; \rho(\widehat{Z}(\xi_1),\widehat{Z}(\xi_2))$,
thus the equality of (b) holds.
\end{proof}

\begin{lem}
\label{lem2.3}
Under the above notation, let $\mathcal{H}=\mathcal{H}(Z)$ and
$\mathcal{S}=\mathcal{S}(Z)$ be the subspaces of $H^2(\mu)$ as in (c) of Theorem with $F$ replaced by $Z$.
Then we have the following properties;

\begin{enumerate}
\renewcommand{\labelenumi}{(\alph{enumi})}

\item
  A function $h$ in $H^2(\mu)$ lies in $\mathcal{S}$
  if and only if $h(\xi) \,=\, 0$ for all $\xi$ in
  $P(\phi)$;

\medskip
\item
  $\mathcal{S}$ is an invariant subspace of $H^2(\mu)$.
\end{enumerate}
\end{lem}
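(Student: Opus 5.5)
The plan is to work with the extension $\widehat{h}(\xi)=\int_X h\,d\mu_\xi$ for $h\in H^2(\mu)$ and $\xi\in P(\phi)$. This is well defined because $d\mu_\xi/d\mu$ is bounded. I will read ``$h(\xi)$'' in (a) as this extension; for $h\in B(\Delta)$ it agrees with the Gelfand transform.

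\textbf{Step 1: expand $\widehat{h}$.} By Lemma \ref{lem2.2}(a), with $\lambda=\widehat{Z}(\xi)$ and $|Z|=1$ $\mu$-a.e.,
\begin{equation*}
\frac{1-|\lambda|^2}{|Z-\lambda|^2}
\;=\;\sum_{n=0}^\infty \lambda^n \overline{Z}^{\,n}+\sum_{n=1}^\infty \overline{\lambda}^{\,n} Z^n .
\end{equation*}
The series converges uniformly on $X$, since $|\lambda|<1$. Integrating against $h$ gives
\begin{equation*}
\widehat{h}(\xi)=\sum_{n\ge0}\langle h,Z^n\rangle\lambda^n+\sum_{n\ge1}\overline{\lambda}^{\,n}\int_X hZ^n\,d\mu .
\end{equation*}
The second sum vanishes. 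Indeed, $hZ^n$ lies in $H^2(\mu)$, and for $n\ge1$ it lies in $H^2_0(\mu)$, because $\phi$ is multiplicative on $H^\infty(\mu)\cdot H^2(\mu)$ by continuity and $\int Z\,d\mu=\widehat{Z}(\phi)=0$. Hence
\begin{equation*}
\widehat{h}(\xi)=\sum_{n\ge0}\langle h,Z^n\rangle\,\widehat{Z}(\xi)^n ,
\end{equation*}
which extends \eqref{eq2.1} to all of $H^2(\mu)$.

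\textbf{Step 2: prove (a).} Since $\widehat{Z}$ maps $P(\phi)$ onto $\Delta$, the function $\widehat{h}$ vanishes on $P(\phi)$ iff the power series $\sum\langle h,Z^n\rangle\zeta^n$ vanishes identically on $\Delta$. That happens iff every coefficient $\langle h,Z^n\rangle$ is $0$, i.e.\ iff $h\perp\mathcal{H}$, i.e.\ iff $h\in\mathcal{S}$.

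\textbf{Step 3: prove (b).} Take $h\in\mathcal{S}$ and $f\in B(\Delta)$. The map $h\mapsto\widehat{h}(\xi)$ is multiplicative on $B(\Delta)\cdot H^2(\mu)$. To see this, approximate $h$ in $L^2(\mu)$ by $g_k\in B(\Delta)$. Since $\mu_\xi\le C\mu$, we get $\int g_k\,d\mu_\xi\to\widehat{h}(\xi)$ and $\int fg_k\,d\mu_\xi\to\widehat{fh}(\xi)$. Because $\xi$ is a homomorphism, $\int fg_k\,d\mu_\xi=f(\xi)g_k(\xi)$. Therefore
\begin{equation*}
\widehat{fh}(\xi)=f(\xi)\,\widehat{h}(\xi)=0 \qquad (\xi\in P(\phi)),
\end{equation*}
and by (a), $fh\in\mathcal{S}$. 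Closedness of $\mathcal{S}$ is automatic, so $\mathcal{S}$ is invariant.

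The main obstacle I expect is the justification in Step 1 that the terms $\int hZ^n\,d\mu$ vanish, i.e.\ multiplicativity of $\phi$ on $Z^n h$ when $h$ is only in $H^2(\mu)$. I would handle it by approximating $h$ by $B(\Delta)$-functions and $Z$ by the $q_n$ of Lemma \ref{lem2.1}. The $q_n$ satisfy $q_n(\phi)=0$ and $\|q_n\|_\infty\le1$, so dominated convergence passes the identity to the limit.
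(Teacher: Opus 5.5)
Your proof is correct and follows the paper's route. You expand $\int_X h\,d\mu_\xi$ through the kernel of Lemma \ref{lem2.2}(a) into the coefficients $\langle h,Z^n\rangle$, obtain (a) from uniqueness of power-series coefficients, and get (b) from multiplicativity of $\mu_\xi$ on $B(\Delta)\cdot H^2(\mu)$ together with (a). Your Step 1 also makes explicit the extension of \eqref{eq2.1} from $B(\Delta)$ to all of $H^2(\mu)$, which the paper uses tacitly in the converse direction of (a).
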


\begin{proof}\; To show the property (a), let $h$ be a
function in $\mathcal{S}$.
Since $h$ is orthogonal to $\mathcal{H}$, we see that
$\langle h, Z^n \rangle \,=\, 0$ for $n=0, 1, \cdots$.
On the other hand, since $h$ lies in $H^2_0(\mu)$, we have
$\langle h , Z^n \rangle \,=\, 0$ for $n=-1, -2, \cdots$.
It follows from (a) of Lemma \ref{lem2.2} that $h(\xi)
= \int_X h \,d\mu_\xi = 0$ for all $\xi$ in $P(\phi)$.
Conversely, suppose for the contrary that a function
$h$ satisfies that $h(\xi) = 0$ on $P(\phi)$ and
$a_n = \langle h , Z^n \rangle \,\neq\,0$ for some
$n \geq 0$. However, \eqref{eq2.1} shows that
$h(\xi)$ may not be identically $0$ on $P(\phi)$,
which is a contradiction. To show (b), let $h$ be a
function in $\mathcal{S}$ and let $f$ be a function
in $B(X)$. Since $\mu_\xi$ is also multiplicative
on $H^2(\mu) = H^2(\mu_\xi)$, we have $(fh)(\xi)
= f(\xi)h(\xi) =0$ for all $\xi$ in $P(\phi)$.
Then (a) shows that $fh$ lies in $\mathcal{S}$,
thus the property (b) holds.
\end{proof}

\bigskip
\section{Characterization of analytic discs}
\label{S3}

We begin this section by showing certain properties of
thin interpolating sequences. The following is a strong
version of \cite[204p, Corollary]{H3} by a similar argument.

\begin{lem}
\label{lem3.1}
Any sequence $\{z_n\}$ in $\Delta$ such that
$\limsup_n \vert z_n\vert = 1$ contains a subsequence
$\{c_k\}$ which is a thin interpolating sequence.
\end{lem}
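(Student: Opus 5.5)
The plan is an inductive selection. I would choose the subsequence so fast that each new point is pseudo-hyperbolically very far from all earlier ones, and so that the accumulated products converge to $1$. Since $\limsup_n |z_n| = 1$, pass first to a subsequence with $|z_n| \to 1$. Fix a sequence $\varepsilon_k \downarrow 0$ with $\sum_k \varepsilon_k < \infty$; for instance $\varepsilon_k = 2^{-k}$. The goal is to pick $c_1, c_2, \ldots$ from $\{z_n\}$ so that
\begin{equation*}
\rho(c_j,c_k) \;\geq\; 1-\varepsilon_{\max(j,k)}\, \delta_{\min(j,k)}
\end{equation*}
for suitable small weights $\delta_j$. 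The idea is that each pair contributes a factor close to $1$, and the errors attached to a fixed $k$ remain summable and tend to $0$ as $k \to \infty$.

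The key elementary fact is this: for fixed $w \in \Delta$, $\rho(z_n, w) \to 1$ whenever $|z_n| \to 1$. This follows from \eqref{eq2.2} and the identity
\begin{equation*}
1-\rho(z,w)^2 = \frac{(1-|z|^2)(1-|w|^2)}{|1-\overline{w}z|^2} \;\le\; \frac{(1-|z|^2)(1-|w|^2)}{(1-|w|)^2}.
\end{equation*}
Hence, once $c_1, \dots, c_{k-1}$ are chosen, there is an index beyond which every $z_n$ satisfies $\rho(z_n, c_j) \ge 1 - \eta_{k}$ for all $j<k$, where $\eta_k$ is any prescribed small number. Take $c_k$ to be such a $z_n$.

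Next comes the choice of the tolerances so that the lower bound involves both past and future points. For each $k$ we need to control
\begin{equation*}
\prod_{j\ne k}\rho(c_j,c_k) \;=\; \prod_{j<k}\rho(c_j,c_k)\cdot\prod_{j>k}\rho(c_j,c_k).
\end{equation*}
Demanding $\rho(c_j,c_k)\ge 1-2^{-j}$ for all $j>k$ (that is, at stage $j$, each new point is $2^{-j}$-close to distance $1$ from all previous points) gives
\begin{equation*}
\prod_{j>k}\rho(c_j,c_k)\;\ge\; \prod_{j>k}(1-2^{-j}) \;\longrightarrow\; 1 \quad (k\to\infty).
\end{equation*}
The past factors satisfy $\prod_{j<k}\rho(c_j,c_k)\ge (1-2^{-k})^{k-1}$, which also tends to $1$. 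It is cleaner, however, to require $\rho(c_k,c_j)\ge 1-2^{-k}/k$ at stage $k$, which makes the past product at least $1-2^{-k}\to 1$. With this requirement the future product for index $k$ is at least $\prod_{j>k}(1-2^{-j}/j)\to 1$. Both bounds follow from the elementary fact above, since only finitely many constraints are imposed at each stage.

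Finally, I would conclude the argument. The infimum over $k$ of $\prod_{j\ne k}\rho(c_j,c_k)$ is positive: each product is strictly positive, because the points are distinct and the infinite products converge, and the products tend to $1$. So $\{c_k\}$ satisfies Carleson's condition quoted in the introduction, making it interpolating, and the limit being $1$ makes it thin. The only mildly delicate step is the bookkeeping that guarantees both the past and the future factors for each $k$ tend to $1$. The key point is that the constraint at stage $j$ is uniform over all earlier indices, so a single summable sequence of tolerances controls all tails simultaneously.
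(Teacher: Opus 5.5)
Your proof is correct, and it differs from the paper's in how you control the ``future'' factors $\prod_{j>k}\rho(c_j,c_k)$.

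The paper first thins $\{z_n\}$ so that $(1-|z_n|)/(1-|z_{n-1}|)<r_{n-1}$, with $r_n$ chosen so that $\prod_{j\ge1}(1-r_n^j)/(1+r_n^j)>\sqrt{1-\varepsilon_n}$. It then uses the Hayman--Newman estimate \eqref{eq3.1} to bound the whole tail product below by $\sqrt{1-\varepsilon_k}$. Since every factor is at most $1$, this bound survives passage to any further subsequence. The induction is therefore needed only for the ``past'' product, choosing $c_k$ with $\prod_{j<k}\rho(c_k,c_j)>\sqrt{1-\varepsilon_k}$, and that is exactly your selection step.

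You avoid the radial-decay condition and Hayman--Newman altogether. You charge each pair $(c_j,c_k)$ to the later index and impose the summable tolerance $2^{-j}/j$ at stage $j$. This gives $\prod_{j<k}\rho(c_j,c_k)\ge 1-2^{-k}$ by Bernoulli's inequality, and $\prod_{j>k}\rho(c_j,c_k)\ge\prod_{j>k}(1-2^{-j}/j)\to 1$. It even yields the explicit uniform bound $\inf_k\prod_{j\ne k}\rho(c_j,c_k)\ge\frac12\prod_{j\ge1}(1-2^{-j}/j)>0$, so your final ``each product is positive and they tend to $1$'' step is not needed.

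Your route uses only that $\rho(z,w)\to1$ as $|z|\to1$ for fixed $w$, so it is shorter and self-contained. The paper's route additionally produces a sequence with geometrically decaying $1-|c_k|$, which satisfies the classical Hayman--Newman condition, together with a tail estimate that holds uniformly for every subsequence of the thinned sequence.
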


\begin{proof}
\; Let $\{\varepsilon_n\}$ be a sequence of positive numbers
with $\lim_n \varepsilon_n = 0$, and let $\{r_n\}$ be a
decreasing sequence of positive numbers
such that
\begin{equation*}
\prod_{j=1}^\infty \frac{1-r_n^j}{1+r_n^j}
\;>\;\sqrt{1-\varepsilon_n}\,,\quad n=1, 2, \cdots\,.
\end{equation*}
Replacing $\{z_n\}$  with a suitable subsequence,
we assume that
\begin{equation*}
\frac{1-\vert z_n\vert}{1-\vert z_{n-1}\vert}
\;<\; r_{n-1}.
\end{equation*}
When $j>k$, we have
\begin{equation*}
1-\vert z_j\vert \leq r_{j-1}r_{j-2}
\cdots\,r_k (1-\vert z_k\vert)\leqq r_k^{j-k}(1-\vert z_k\vert)
\end{equation*}
and thus
\begin{equation*}
\vert z_j\vert-\vert z_k\vert
\geq (1-r_k^{j-k})(1-\vert z_k\vert)\,.
\end{equation*}
On the other hand,
\begin{equation*}
1-\vert z_j\vert \vert z_k\vert
= (1-\vert z_k\vert)-(\vert z_j\vert \vert z_k\vert
-\vert z_k\vert))
\leqq (1+r_k^{j-k})(1-\vert z_k\vert)\,.
\end{equation*}
As in the proof of the Hayman and Newman theorem
(\cite[203p]{H3}), we obtain
\begin{equation}
\label{eq3.1}
\prod_{j= k+1}^\infty \left \vert
\frac{z_j - z_k}{1 - \overline{z}_k z_j}  \right \vert
\;\geq \;
\prod_{j=1}^\infty \frac{1-r_k^j}{1+r_k^j} \;>\;\sqrt{1-\varepsilon_k}\,.
\end{equation}
Now let us choose a thin subsequence $\{c_k\}$
by induction. Put $c_1 = z_1$ and suppose
$c_1, c_2, \cdots, c_{k-1}$ have been chosen. Find
$c_k = z_{k+m}$ in $\{z_j; j \geq k\}$ such that
\begin{equation*}
\prod_{j= 1}^{k-1}\left \vert
\frac{c_k - c_j}{1 - \overline{c_j}c_k}  \right \vert
\;>\;\sqrt{1-\varepsilon_k}\,.
\end{equation*}
Since we have
\begin{equation*}
\prod_{j= k+m+1}^\infty \left \vert
\frac{z_j - c_k}{1 - \overline{c_k}z_j} \right \vert
\;>\;\sqrt{1-\varepsilon_{k+m}}\,>\,\sqrt{1-\varepsilon_k}\,
\end{equation*}
by \eqref{eq3.1}, it is easy to see that $\{c_k\}$
is a thin subsequence of $\{z_n\}$.
\end{proof}

\medskip
There is another way to show the existence of thin
interpolating subsequences. For given $\varepsilon$
and $\eta$ with $0 < \varepsilon,\eta <1$, there is
a $\delta=\delta(\varepsilon, \,\eta) > 0$ such that
if the zeros $\{z_k\}$ of a Blaschke product $B(z)$
satisfy that
$\sum_{k=1}^\infty (1-\vert z_k\vert) < \delta,$
then $\vert B(z)\vert > 1-\varepsilon$ for
$\vert z \vert \leq\eta$ (see \cite[Lemma 4.1]{T2}).
From this fact we may choose easily a thin subsequence
from any interpolating sequence.

\medskip
The following is well-known (see \cite[Chapter X, Exercise
8]{Ga}). However, for our purposes, it would be helpful
to state it precisely. For an analytic disc $P(\phi)$ in
$\mathfrak{M}_1$, let $Z, \widehat{Z}, \tau =
\widehat{Z}^{-1}$ and $\{q_n\}$ be as in Section 2,
associated with Wermer's embedding theorem.

\begin{lem}
\label{lem3.2}
Let $\{c_n\}$ be a thin interpolating sequence in $\Delta$
with $\lim_n c_n = 1$, and let $B$ be the Blashke product
with zeros $\{c_n\}$. Denote by $L$ be the Hoffman map
$L=\lim_\beta L_{c_\beta}$ induced by a convergent subnet
$(c_\beta)$ of $\{c_n\}$ in $\mathfrak{M}(H^\infty)$. If
$P(\phi)$ is the Gleason part of $\phi=L(0)$, then $P(\phi)$
is an analytic disc in the fiber $\mathfrak{M}_1$, and $L$
is a homeomorphism of $\Delta$ onto $P(\phi)$ such that
$(e^{i\alpha}B\circ L)(\zeta) = \zeta$ with unimodular
constant $e^{i\alpha}$.
\end{lem}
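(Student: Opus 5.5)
The plan is to compute $B\circ L$ explicitly, and then use Lemma \ref{lem2.2} to show that $L$ is injective and covers the whole part.

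\textbf{Step 1: compute $B\circ L$.} Fix $\zeta\in\Delta$. Write
\begin{equation*}
B\circ L_{c_\beta}(\zeta)=\frac{\overline{c_\beta}}{|c_\beta|}\,\frac{c_\beta-L_{c_\beta}(\zeta)}{1-\overline{c_\beta}L_{c_\beta}(\zeta)}\,B_\beta(L_{c_\beta}(\zeta)),
\end{equation*}
where $B_\beta$ is $B$ with the factor at $c_\beta$ removed.
\begin{itemize}
\item A direct computation gives $(c-L_c(\zeta))/(1-\overline{c}L_c(\zeta))=-\zeta\,(1-|c|^2)/(1-|c|^2)\cdot(\text{unimodular})$. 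Since $c_\beta\to 1$, the first two factors tend to $-\zeta$.
\item For the remaining factor, use $\rho(L_{c_\beta}(\zeta),c_\beta)=|\zeta|$ and the standard Harnack-type estimate
\begin{equation*}
|B_\beta(w)|\ \ge\ \frac{|B_\beta(c_\beta)|-\rho(w,c_\beta)}{1-|B_\beta(c_\beta)|\,\rho(w,c_\beta)} .
\end{equation*}
Thinness gives $|B_\beta(c_\beta)|\to1$, so $|B_\beta(L_{c_\beta}(\zeta))|\to1$ uniformly for $|\zeta|\le r$.
\item Fix the argument: $B_\beta\circ L_{c_\beta}$ is a bounded analytic family of modulus tending to $1$. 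Every normal limit is therefore a unimodular constant, and along the convergent subnet it equals $\lim_\beta B_\beta(c_\beta)$, possibly after passing to a further subnet of the unimodular scalars, which does not change $L$.
\end{itemize}
Combining these, $B\circ L(\zeta)=e^{-i\alpha}\zeta$ for some constant $e^{i\alpha}$. This is the main computational step. The care needed is in getting uniform convergence so that the limit passes through the product topology definition of $L$; this is handled by the normal-family remark preceding the Theorem.

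\textbf{Step 2: $L$ is injective and $P(\phi)$ is nontrivial.} From Step 1, $e^{i\alpha}B\circ L=\mathrm{id}$, so $L$ is injective. Also, $L(\Delta)\subset P(\phi)$, since $L$ is analytic with $L(0)=\phi$ and Schwarz's lemma gives $\rho(L(\zeta),\phi)\le|\zeta|<1$. Hence $P(\phi)$ contains more than one point, so by Wermer's embedding theorem it is an analytic disc. It lies in $\mathfrak{M}_1$ because $z\circ L_{c_\beta}\to1$ pointwise.

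\textbf{Step 3: surjectivity.} Let $\xi\in P(\phi)$ and set $\zeta_0=e^{i\alpha}B(\xi)$; I want $\xi=L(\zeta_0)$. Use the Wermer coordinate $\widehat Z$. The map $\widehat Z\circ L$ is an analytic self-map of $\Delta$, and $e^{i\alpha}B\circ\tau$ is another one. Their composition satisfies
\begin{equation*}
(e^{i\alpha}B\circ\tau)\circ(\widehat Z\circ L)=e^{i\alpha}B\circ L=\mathrm{id}.
\end{equation*}
By the Schwarz lemma, two self-maps of the disc whose composition is the identity must both be automorphisms. Hence $\widehat Z\circ L$ is onto $\Delta$, and therefore $L$ is onto $P(\phi)$.

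\textbf{Step 4: homeomorphism.} The inverse of $L$ is $e^{i\alpha}B$ restricted to $P(\phi)$, which is continuous, and $L$ itself is continuous. So $L$ is a homeomorphism of $\Delta$ onto $P(\phi)$.

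Since continuity of $L$ in the Gelfand topology could be an issue, I will instead invoke the standard fact that Hoffman maps are continuous. This holds because $f\circ L$ is analytic for every $f$, and the Gelfand topology is the weak topology induced by $B(\Delta)$.
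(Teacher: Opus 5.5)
Your proposal is correct and follows essentially the same route as the paper. Surjectivity comes from the composition $(e^{i\alpha}B\circ\tau)\circ(\widehat{Z}\circ L)=\mathrm{id}$ together with Schwarz's lemma, and the homeomorphism from continuity of the inverse $e^{i\alpha}B|_{P(\phi)}$; both are exactly the paper's argument. The only variation is in Step 1: you get $|B\circ L(\zeta)|=|\zeta|$ from the factorization at $c_\beta$ and a Schwarz--Pick lower bound for $B_\beta$, whereas the paper computes $|(B\circ L)'(0)|=\lim_\beta(1-|c_\beta|^2)|B'(c_\beta)|=1$ and applies Schwarz's lemma. Both rest on the same fact, $|B_\beta(c_\beta)|\to 1$ by thinness.
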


\begin{proof}
\; Since if $f$ is in $B(\Delta)$, then $f\circ L =
\lim_\beta f\circ L_{c_\beta }$ uniformly on compact
subsets of $\Delta$, $L$ is an analytic map of $\Delta$
to $P(\phi)$ by \cite[Chapter X, Lemma 1.1]{Ga}. Hence,
$L$ is continuous on $\Delta$. Observe that $B(0)=\Pi_{n=1}^\infty\,|c_n|\,\geq 0$
and $(B\circ L)(0) = B(\phi) = 0$. Since
\begin{equation*}
\vert(B\circ L)'(0)\vert \;=\; \lim_\beta\,
\vert(B\circ L_\beta)'(0)\vert
 \;=\; \lim_\beta \,(1-\vert c_\beta\vert^2)\,
 \vert B'(c_\beta)\vert \,=\,1,
\end{equation*}
$P(\phi)$ is a nontrivial part and Schwarz's
lemma shows that $(B\circ L)(\zeta) = e^{-i\alpha}\zeta$,
for an unimodular constant $e^{i\alpha}$. Putting
$B_0(\zeta)= e^{i\alpha}B(\zeta)$, we obtain
$B_0\circ L)(\zeta)=\zeta$. For the analytic disc
$P(\phi)$, Wermer's embedding theorem shows that
the above $\tau=\widehat{Z}^{-1}$ is an analytic map
of $\Delta$ onto $P(\phi)$ with $\tau(0)=\phi$. Let
$f=\widehat{Z}\circ L$ and $g=B_0\circ \tau$. Then
both functions map $\Delta$ into itself, and vanish
at $0$. Since $g(f(\zeta)) = (B_0\circ L)(\zeta)
= \zeta$, we obtain $g'(f(\zeta))f'(\zeta) \equiv 1$,
so $g'(0)f'(0)=1$. Since $f'(0)$ is a unimodular
constant we see that $L$ maps $\Delta$ onto $P(\phi)$.
Since $B_0=L^{-1}$ is continuous on $P(\phi)$, $L$ is
a homeomorphism, as desired.
\end{proof}

Recall that $Z$ is determined up to multiplication by
unimodular constants. Since $f'(0)= e^{i\beta}$,
the map $\tau$ is identified with $L$, by replacing
$Z$ with $e^{-i\beta}Z$, This property partially
extends to any nontrivial part.

\medskip
Let $\phi$ be a homomorphism in the fiber $\mathfrak{M}_1$
of which $P(\phi)$ is an analytic disc. Then the corona
theorem assures the existence of a net $(w_\alpha)$ in
$\Delta$ such that $\lim_\alpha w_\alpha = \phi$ in
$\mathfrak{M}(H^\infty)$. Let $L_{w_\alpha}(\zeta) =
(\zeta+w_\alpha)/(1+\overline{w}_\alpha \zeta)$ as
before. Replacing a suitable subnet of $(L_{w_\alpha})$,
we assume also that there is an $L$ in $\mathfrak{M}
(H^\infty)^\Delta$ with $L(0) = \phi$ such that
$L\,=\,\lim_\alpha L_{w_\alpha}$.
Denote by $(w_\alpha)^*$ the set of all points in
$\Delta$ appearing in the net $(w_\alpha)$. For the
sequence $\{q_n\}$ in Lemma \ref{lem2.1}, we consider
the finite subset $\{q_1, q_2, \cdots, q_m\}$ of
$\{q_n\}$. It follows from the definition of nets
(directed sets) that there is a thin interpolating
sequence $\{w_i^m\}$ in $(w_\alpha)^*$ such that
\begin{equation}
\label{eq3.2}
q_k\circ L(\zeta)\;=\; \lim_{i\to \infty}\, q_k\circ
L_{w_i^m}(\zeta)\,, \qquad \text{for}\qquad
k=1, 2, \cdots, m,
\end{equation}
uniformly on compact subsets of $\Delta$. Let
$L_i^{(m)} = L_{w^m_i}$, and let $L^{(m)}$ be the Hoffman
map $L^{(m)}=\lim_\beta L_\beta^{(m)}$ by a convergent
subnet $(L_\beta^{(m)})$ of the sequence $\{L_i^{(m)}\}$
in $\mathfrak{M}(H^\infty)^\Delta$. Since $\{w^m_i\}$ is
a thin interpolating sequence, Lemma \ref{lem3.2} shows
that $L^{(m)}$ is a homeomorphism of $\Delta$ onto
$P(\phi^{(m)})$ with $\phi^{(m)} = L^{(m)}(0)$. Let
$B^{(m)}$ be the Blaschke product with zeros $\{w^m_i\}$.
Then $B^{(m)}$ satisfies that $e^{i\alpha_m} B^{(m)}
\circ L^{(m)}(\zeta) =\zeta$ with a unimodular constant
$e^{i\alpha_m}$.

On the other hand, by Wermer's embedding theorem on
$P(\phi)$, we know that each $q_k$ is represented as
\begin{equation}
\label{eq3.3}
q_k(\xi)
= a_1^{(k)}\widehat{Z}(\xi) + a_2^{(k)}\widehat{Z}(\xi)^2 +
\cdots \,,
\qquad \xi \in P(\phi),
\end{equation}
with $\{a_i^{(k)}\}$ in $\ell^2$. We assume that
$(q_k\circ \tau)(0) = a_0^{(k)} = 0$ and
$(q_k\circ \tau)'(0) = a_1^{(k)} > 0$. This assumption
shows that $Z, \widehat{Z}$ and $ \tau = \widehat{Z}^{-1}$
are determined uniquely. Notice also that $a_1^{(k)} \to 1$
as $k \to \infty$, by the property of $\{q_n\}$ in Lemma
\ref{lem2.1}.

\medskip
In the next lemma, we discuss the role of the Hoffman map
$L$ in relation to the above map $\tau$ on $\Delta$;

\medskip
\begin{lem}
\label{lem3.3}
For an analytic disc $P(\phi)$ in $\mathfrak{M}_1$, let
$Z, \widehat{Z}, \tau = \widehat{Z}^{-1}$ and
$L\,=\,\lim_\alpha L_{w_\alpha}$ be as above. Then we
may choose a unimodular constant $e^{i\alpha}$ such that
\begin{equation}
\label{eq3.4}
\widehat{Z}\circ L(\zeta) = e^{i\alpha} \zeta\,,
 \qquad \zeta\in \Delta\,,
\end{equation}
so that $(e^{-i\alpha}\widehat{Z})^{-1}(\zeta) = L(\zeta)$.
Consequently, in Wermer's embedding theorem
on $P(\phi)$, if we replace $Z$ with $e^{-i\alpha} Z$,
then $\tau = L$, which is a continuous
map of $\Delta$ onto $P(\phi)$.
\end{lem}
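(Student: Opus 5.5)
Here is the plan. Set $f=\widehat{Z}\circ L$ and $g=\widehat{Z}\circ\tau$ in the spirit of the proof of Lemma \ref{lem3.2}, and show that $f$ is a rotation of the disc.

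First, $L$ is an analytic map of $\Delta$ into $P(\phi)$ with $L(0)=\phi$. Every $q_k$ satisfies $\|q_k\|_\infty\le 1$, so $q_k\circ L$ is an analytic self-map of $\Delta$. By \eqref{eq3.3}, $q_k(\xi)=(q_k\circ\tau)(\widehat{Z}(\xi))$ on $P(\phi)$, hence $q_k\circ L=(q_k\circ\tau)\circ f$. Lemma \ref{lem2.1} gives $q_k\to Z$ $\mu$-a.e. with $|q_k|\le1$, and $\mu_\xi\ll\mu$ with bounded derivative by Lemma \ref{lem2.2}(a). Dominated convergence then yields $q_k(\xi)\to\widehat{Z}(\xi)$ pointwise on $P(\phi)$, so $q_k\circ L\to f$ pointwise on $\Delta$. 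Since these are normal families, the convergence is locally uniform, and $f$ is an analytic self-map of $\Delta$ with $f(0)=0$.

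The key step is to show $|f'(0)|=1$. I would compute $(q_k\circ L)'(0)$ through the net:
\begin{equation*}
(q_k\circ L)'(0)=\lim_\alpha (q_k\circ L_{w_\alpha})'(0)=\lim_\alpha (1-|w_\alpha|^2)q_k'(w_\alpha).
\end{equation*}
The aim is to bound this from below in modulus by something tending to $1$. The natural tool is \eqref{eq3.2}, which replaces $L$ by limits along the thin interpolating sequences $\{w_i^m\}$. Then Lemma \ref{lem3.2} provides Blaschke products $B^{(m)}$ with $e^{i\alpha_m}B^{(m)}\circ L^{(m)}=\mathrm{id}$.

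A cleaner alternative avoids this. The Schwarz--Pick lemma applied to $q_k\circ\tau$ and to $q_k\circ L=(q_k\circ\tau)\circ f$ gives
\begin{equation*}
|(q_k\circ L)'(0)|=a_1^{(k)}|f'(0)|\le |f'(0)|.
\end{equation*}
To reverse this inequality, I would use that $\rho$ on $P(\phi)$ is realized by $\widehat{Z}$ (Lemma \ref{lem2.2}(b)). Since $L(\zeta)$ is the limit of $L_{w_\alpha}(\zeta)$, and $\rho(w_\alpha,L_{w_\alpha}(\zeta))=|\zeta|$, lower semicontinuity of $\rho$ only gives $\rho(\phi,L(\zeta))\le|\zeta|$. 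That is the wrong direction, and it is the main obstacle.

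To get the right direction, I would invoke Hoffman's lemma \cite[Chapter X, Lemma 1.3]{Ga} on the thin subsequences. Lemma \ref{lem3.2} yields $\rho(\phi^{(m)},L^{(m)}(\zeta))=|\zeta|$ exactly, via $B^{(m)}$. By \eqref{eq3.2}, $q_k\circ L=q_k\circ L^{(m)}$ for $k\le m$, since both are limits along subnets of $\{w_i^m\}$. Hence $|(q_m\circ L^{(m)})'(0)|=|(q_m\circ L)'(0)|$. Because $q_m$ is close to $Z$, which realizes $\rho$ on $P(\phi)$, I would argue that $|(q_m\circ L)'(0)|\to 1$. That gives $|f'(0)|\ge \lim a_1^{(k)}|f'(0)|$ paired with a lower bound of $1$.

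With $|f'(0)|=1$, Schwarz's lemma gives $f(\zeta)=e^{i\alpha}\zeta$, which is \eqref{eq3.4}. Then $L=\widehat{Z}^{-1}(e^{i\alpha}\zeta)=(e^{-i\alpha}\widehat{Z})^{-1}(\zeta)$. Replacing $Z$ by $e^{-i\alpha}Z$ makes $\tau=L$. Continuity of $L$ follows from its analyticity as a Hoffman map, and surjectivity follows because $\widehat{Z}$ is a bijection onto $\Delta$.
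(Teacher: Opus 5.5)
The gap is at the key step. Everything before it is sound: $f=\widehat{Z}\circ L$ is an analytic self-map of $\Delta$ with $f(0)=0$, $q_k\circ L=(q_k\circ\tau)\circ f$, and $|(q_k\circ L)'(0)|=a_1^{(k)}|f'(0)|\le|f'(0)|$. Once $|f'(0)|=1$ is known, your last paragraph finishes correctly.

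The failing sentence is the claim that $|(q_m\circ L)'(0)|\to 1$ ``because $q_m$ is close to $Z$, which realizes $\rho$ on $P(\phi)$.'' By your own first paragraph, $q_k\circ L\to f$ locally uniformly, so $(q_k\circ L)'(0)\to f'(0)$. The claim is therefore literally equivalent to $|f'(0)|=1$, which is the lemma itself. Lemma~\ref{lem2.2}(b) only gives $\rho(\phi,L(\zeta))=|f(\zeta)|$, and this says nothing about whether $|f(\zeta)|=|\zeta|$.

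The thin sequences do not help either. The identity $\rho(\phi^{(m)},L^{(m)}(\zeta))=|\zeta|$ is witnessed by $B^{(m)}$ on the different part $P(\phi^{(m)})$. By contrast, $q_k$ approximates $Z$ only in $L^2(\mu)$, which controls $q_k$ on $P(\phi)$ and not on $P(\phi^{(m)})$. Moreover, \eqref{eq3.5} transports nothing but the values of $q_1,\dots,q_m$. Hence $|(q_m\circ L^{(m)})'(0)|=|(q_m\circ L)'(0)|=a_1^{(m)}|f'(0)|$, and you are back where you started. Your closing phrase about ``a lower bound of $1$'' produces no inequality.

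Since $\sup|(g\circ L)'(0)|=|f'(0)|$, taken over $g$ in the unit ball of $B(\Delta)$ with $g(\phi)=0$, what you actually need is such a $g$ with $\lim_\alpha(1-|w_\alpha|^2)|g'(w_\alpha)|$ close to $1$. The $B^{(m)}$ provide this for $L^{(m)}$. However, they are not among the functions for which \eqref{eq3.5} holds, so nothing passes to $L$.

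You should also know that the paper's written proof follows the same route and does not supply this step. It expands $q_k\circ L^{(m)}=\sum_n b_n^{(k)}\zeta^n$ using $e^{i\alpha_m}B^{(m)}\circ L^{(m)}=\mathrm{id}$, and then sets $a_n^{(k)}=b_n^{(k)}e^{-in\alpha}$ in \eqref{eq3.3}. But the $a_n^{(k)}$ there are already fixed as the Taylor coefficients of $q_k\circ\tau$, while the $b_n^{(k)}$ are those of $q_k\circ L=(q_k\circ\tau)\circ f$. Since $a_1^{(k)}\neq 0$, that identification is equivalent to $q_k\circ L(\zeta)=q_k\circ\tau(e^{i\alpha}\zeta)$, that is, to \eqref{eq3.4} itself. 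The chain rule gives only $b_1^{(k)}=a_1^{(k)}f'(0)$, which fixes $\arg f'(0)=\alpha$ but not $|f'(0)|$.

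So imitating the paper will not close your gap. The ingredients assembled here are Wermer's embedding, \eqref{eq3.2} and \eqref{eq3.5}, and Lemma~\ref{lem3.2} for the auxiliary parts $P(\phi^{(m)})$. None of them ties $|f'(0)|$ to $1$, so an additional idea is needed precisely at this point. That missing idea is the content of Hoffman's theorem that the Hoffman map is onto a nontrivial part.
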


\begin{proof}
Since $\{w_i^m\}$ is a thin interpolating sequence,
the above $L^{(m)}$ is an analytic homeomorphism of
$\Delta$ onto $P(\phi^{(m)})$. Although $L^{(m)}$
is usually different from $L$, it follows from
\eqref{eq3.2} that
\begin{equation}
q_k\circ L^{(m)}(\zeta) \;=\; q_k\circ L(\zeta),
\qquad \text{for}\quad k=1, 2, \cdots, m.
\label{eq3.5}
\end{equation}
So we see that
\begin{equation*}
\lim_{m\to \infty}\,q_k\circ L^{(m)}(\zeta)
\,= \,q_k\circ L(\zeta)
\end{equation*}
uniformly on on $\Delta$, for all $k=1, 2, \cdots$.
Then \eqref{eq3.5} implies also that
\begin{equation}
\label{eq3.6}
(q_k\circ L^{(m)})'(0)\;=\;(q_k\circ L)'(0) \qquad
\text{for} \quad m= k, k+1, k+2, \cdots.
\end{equation}
Since $\vert q_k\circ L (\zeta)\vert \leq 1$ and
$q_k\circ L (0) = 0$, Schwarz's lemma shows that
$|(q_k\circ L)'(0) |\leq 1$.
Observe that $q_k\circ L^{(m)}$ is a bounded analytic
function on $\Delta$ with $q_k\circ L^{(m)}(0) = 0$,
and recall that $(e^{i\alpha_m} B^{(m)} \circ L^{(m)})
(\zeta) = \zeta$. Then there is a sequence $\{b^{(k)}_n \}$
in $\ell^2$ such that
\begin{equation*}
q_k\circ L^{(m)}(\zeta)
\;=\; \sum_{n=1}^\infty \,b^{(k)}_n \zeta^n
\;=\;\sum_{n=1}^\infty \, b^{(k)}_n \,
(e^{i\alpha_m} B^{(m)}\circ L^{(m)})^n(\zeta),
\qquad \zeta \in \Delta,
\end{equation*}
which represents also $q_k\circ L(\zeta)$ by
\eqref{eq3.5}. We now write $b^{(k)}_1 e^{-i\alpha}
\,=\,|b^{(k)}_1|\,$ and put $a^{(k)}_n
= b^{(k)}_n e^{-i n \alpha}$ in \eqref{eq3.3}.
Since $\widehat{Z}\circ \tau(\zeta) = \zeta$ and
$q_k\circ L(\zeta) \,=\, \sum_{n=1}^\infty \,
a^{(k)}_n (\widehat{Z}\circ L)^n(\zeta)$ by
\eqref{eq3.3}, we have
\begin{equation*}
\begin{split}
q_k\circ L(\zeta)
&= \sum_{n=1}^\infty \,
b^{(k)}_n \zeta^n
\;=\;\sum_{n=1}^\infty \,
b^{(k)}_n (\widehat{Z}\circ \tau)^n(\zeta)
\;=\;\sum_{n=1}^\infty \,
b^{(k)}_n (e^{-i\alpha}\widehat{Z}\circ \tau)^n(e^{i\alpha}\zeta)\\
&=\;\sum_{n=1}^\infty \,
a^{(k)}_n (\widehat{Z}\circ \tau)^n(e^{i\alpha}\zeta)
\;=\;\sum_{n=1}^\infty \,
a^{(k)}_n (\widehat{Z}\circ L)^n(\zeta)\,.
\end{split}
\end{equation*}
It follows from \eqref{eq3.6} that
\begin{equation*}
(q_k\circ L)'(0) = a^{(k)}_1 (\widehat{Z}\circ L)'(0)
= a^{(k)}_1 e^{i\alpha}\,,
\end{equation*}
so that $(\widehat{Z}\circ L)'(0) = e^{i\alpha}$. Then
Schwarz's lemma shows that $\widehat{Z}\circ L(\zeta)
= e^{i\alpha}\zeta$, which is the desired equation
\eqref{eq3.4}.
\end{proof}

\medskip
We notice that the unimodular constant $e^{i\alpha}$
above does not depend on $q_k$, because of our
assumption $(q_k\circ \tau)'(0) = a^{(k)}_1> 0$ in
Lemma \ref{lem2.1}.

\medskip
\begin{lem}
\label{lem3.4}
Let $L$ and $\{L^{(m)}\}$ be as above. Then for all $f$
in $B(\Delta)$,
\begin{equation}
\label{eq3.7}
\lim_{m\to \infty}\,f\circ L^{(m)}(\zeta) = f\circ L(\zeta)
\end{equation}
uniformly on compact subsets of $\Delta$.
\end{lem}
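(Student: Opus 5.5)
The plan is a normal-family argument followed by an identification of the limit. By Lemma \ref{lem3.3} we may take $\tau = L$, so that $\widehat{Z}\circ L(\zeta)=\zeta$ and, by \eqref{eq2.1}, $f\circ L(\zeta)=\sum_{n\ge 0}\langle f,Z^n\rangle \zeta^n$ for every $f$ in $B(\Delta)$. Fix $f$. Since $\vert f\circ L^{(m)}\vert\le \Vert f\Vert_\infty$, the family $\{f\circ L^{(m)}\}$ is normal. It therefore suffices to show that every subsequence has a further subsequence whose limit is $f\circ L$. Pass to a subsequence $m_j$, and then to a subnet of $(L^{(m_j)})$ converging in the compact space $\mathfrak{M}(H^\infty)^\Delta$ to some $\Lambda$. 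Every bounded analytic function along a subnet of a normal sequence that converges pointwise converges locally uniformly, as in the remark after the definition of Hoffman maps. Hence $\Lambda$ is an analytic map of $\Delta$ into one Gleason part by \cite[Chapter X, Lemma 1.1]{Ga}, and $f\circ L^{(m_j)}\to f\circ \Lambda$ locally uniformly along that subnet. The whole claim thus reduces to showing $\Lambda = L$, independently of the chosen subnet; then every subsequential limit equals $f\circ L$ and \eqref{eq3.7} follows.

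To identify $\Lambda$, I use the functions $q_k$ of Lemma \ref{lem2.1}. By \eqref{eq3.5}, $q_k\circ L^{(m)} = q_k\circ L$ for $m\ge k$, so passing to the limit gives
\begin{equation*}
q_k\circ\Lambda(\zeta) \;=\; q_k\circ L(\zeta), \qquad k=1,2,\dots,\ \zeta\in\Delta .
\end{equation*}
Since $q_k\to Z$ in $L^1(\mu)$ and $\mu_\xi$ has bounded density with respect to $\mu$ by Lemma \ref{lem2.2}(a), we have $q_k\circ L(\zeta)\to \widehat{Z}(L(\zeta)) = \zeta$. Now suppose we know that $\Lambda(\zeta)\in P(\phi)$. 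Then the same bounded-density argument gives $q_k(\Lambda(\zeta))\to \widehat{Z}(\Lambda(\zeta))$. Hence $\widehat{Z}\circ\Lambda = \widehat{Z}\circ L$, and injectivity of $\widehat{Z}$ on $P(\phi)$ yields $\Lambda=L$.

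The main obstacle is proving $\Lambda(\Delta)\subset P(\phi)$; since $\Lambda$ is analytic, it is enough to show $\psi:=\Lambda(0)\in P(\phi)$. We know $q_k(\psi)=0$ for all $k$, and $q_k(\Lambda(\zeta))\to\zeta$. My first attempt is a Schwarz-lemma estimate: $\zeta\mapsto q_k\circ\Lambda(\zeta)$ maps $\Delta$ into $\overline{\Delta}$, fixes $0$, and tends to the identity. One would like to use this to show $\rho(\Lambda(\zeta),L(\zeta))$ stays bounded away from $1$ for small $\zeta$, comparing $\psi$ with $\phi$ through functions built from $q_k$ and $B^{(m)}$. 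If this is not enough, which I suspect, since the $q_k$ alone need not separate parts, I will refine the construction of $\{w_i^m\}$. By the directed-set argument behind \eqref{eq3.2}, the points $w_i^m$ can also be chosen so that $\phi^{(m)}$ lies in any prescribed neighborhood of $\phi$ from a shrinking family. Along $m$ this forces $\psi=\lim \phi^{(m)}$ to agree with $\phi$ on the relevant functions. Combined with $\widehat{Z}\circ\Lambda(0)=0=\widehat{Z}(\phi)$, this gives $\psi=\phi$ and hence $\Lambda(\Delta)\subset P(\phi)$.
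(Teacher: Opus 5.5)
Your reduction matches the paper's: by normality it is enough to show that every adherent point $\Lambda$ of $\{L^{(m)}\}$ in $\mathfrak{M}(H^\infty)^\Delta$ equals $L$, and \eqref{eq3.5} gives $q_k\circ\Lambda=q_k\circ L$ for all $k$. Your final step is also sound. \emph{If} $\Lambda(\Delta)\subset P(\phi)$, then $q_k\to Z$ in $L^1(\mu)$ and the bounded densities $d\mu_\xi/d\mu$ give $\widehat{Z}\circ\Lambda=\widehat{Z}\circ L$, hence $\Lambda=L$.

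The gap is that inclusion, and it cannot be obtained from $q_k\circ\Lambda=q_k\circ L$: the $q_k$ need not separate parts, exactly as you suspected. Here is an example.
Let $\{c_n\}$ be a thin interpolating sequence with $c_n\to 1$ and Blaschke product $B$. Passing to a subsequence, assume $B\circ L_{c_n}(\zeta)\to\lambda\zeta$ locally uniformly with $|\lambda|=1$. Let $(w_\alpha)$ be a subnet of $\{c_n\}$ converging to $\phi$.
By Lemma \ref{lem3.2} one may take $Z=\overline{\lambda}B$. The constant sequence $q_k=\overline{\lambda}B$ then meets every requirement of Lemma \ref{lem2.1}.
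Split $(w_\alpha)^*$ into two infinite sets. $\phi$ lies in the closure of exactly one of them, and the other one, $A$, is thin and satisfies \eqref{eq3.2} for every $m$.
Take $\{w_i^m\}=A$ for all $m$, and a subnet converging to some $\psi\in\overline{A}\setminus\Delta$. This gives $L^{(m)}=L_\psi$ for all $m$ and $q_k\circ L^{(m)}=q_k\circ L$ for all $k$.
But the Blaschke product $f$ of $A$ vanishes at $\psi$ and, by thinness, has modulus one at $\phi$. So $\rho(\psi,\phi)=1$ and $f\circ L^{(m)}(0)=f(\psi)\ne f(\phi)=f\circ L(0)$.
Hence no Schwarz-lemma estimate built from the $q_k$ and $B^{(m)}$ can place $\Lambda(0)$ in $P(\phi)$. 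Moreover, with $\{w_i^m\}$ and the subnets left as free as in the construction, the conclusion of the lemma can actually fail.

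Your repair does not supply the missing control either. A family of neighborhoods $V_m$ indexed by $m$ only gives $\psi\in\bigcap_m\overline{V_m}$, i.e.\ agreement with $\phi$ on countably many functions. That never pins down $\phi$, because no point of $\mathfrak{M}(H^\infty)\setminus\Delta$ is a $G_\delta$. Otherwise a sequence in $\Delta$ would converge to it, whereas by Lemma \ref{lem3.1} such a sequence has an interpolating subsequence, whose closure is a copy of $\beta\mathbf{N}$.
Your closing appeal to $\widehat{Z}(\Lambda(0))=0=\widehat{Z}(\phi)$ is circular. $\widehat{Z}(\xi)=\int_X Z\,d\mu_\xi$ only makes sense for $\xi\in P(\phi)$, where $\mu_\xi\ll\mu$. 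What you actually know, $q_k(\psi)=0$ for all $k$, holds in the example above as well.

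For comparison, the paper does not try to put $L_1(0)$ into $P(\phi)$. It proceeds as follows:
\begin{enumerate}
\item It uses $|(q_k\circ L_1)'(0)|\to 1$ to see that $L_1$ maps into some nontrivial part $P(\phi_1)$.
\item It sets up Wermer's embedding there with $Z_1$ and matches the Taylor coefficients of $q_k\circ L_1=q_k\circ L$.
\item It then asserts $f=\sum_n a_n\widehat{Z}_1^{\,n}$ on $P(\phi_1)$ with the $\mu$-coefficients $a_n=\langle f,Z_0^n\rangle$.
\end{enumerate}
That last assertion is made while conceding that $g$ may differ from $f$ in $L^2(\mu_1)$. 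It is precisely what the example violates, since there $P(\phi_1)=P(\psi)$. So the paper's route does not get around the obstruction you found. What is really missing is additional control over how $\{w_i^m\}$ and the subnets defining $L^{(m)}$ are chosen.
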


\begin{proof}
\; Let $Z$ be the inner function in Lemma \ref{lem3.3}, and
let $Z_0 = e^{-i\alpha} Z$ for the unimodular constant
$e^{i\alpha}$ in \eqref{eq3.4}. Then $\widehat{Z}_0\circ
L(\zeta) = \zeta$.  Let $\{q_k\}$ be the sequence in
Lemma \ref{lem2.1}, and put $a_n^{(k)} = \langle q_k,
Z^{\,n}_0\rangle$ in $L^2(\mu)$. Then we obtain
\begin{equation*}
q_k\circ L(\zeta)
= a_1^{(k)}\zeta + a_2^{(k)}\zeta^2 + \cdots \,,
\qquad \zeta \in \Delta\,,
\end{equation*}
with $a_1^{(k)} = (q_k\circ L)'(0)= |a_1^{(k)}|\,
e^{i\alpha}$. Notice that $a_1^{(k)}$ is different
from the one in the proof of Lemma \ref{lem3.3}.
Moreover, since
\begin{equation*}
\Vert q_k - Z \Vert_{L^2(\mu)} \rightarrow 0
\qquad \text{and} \qquad
1 \;\geq \;\Vert e^{-i\alpha} q_k \Vert_{L^2(\mu)}^2
\;\geq \; |a_1^{(k)}|^2 +
\sum_{n=2}^\infty\, |a_n^{(k)}|^2 \,,
\end{equation*}
we see that $a_1^{(k)} \to e^{i\alpha}, \,q_k\circ
L(\zeta) \to \widehat{Z}_0\circ L(\zeta) = \zeta$ and
$\sum_{n=2}^\infty\, |a_n^{(k)}|^2 \,\to 0$. Since
$a_1^{(k)} = (q_k\circ L)'(0)$ , \eqref{eq3.6} implies
also that $(q_k\circ L^{(m)})'(0) \to e^{i\alpha}$,
whenever $ k,\, m \to \infty$ with $ k \leq m$.
On the other hand, since $\{L^{(m)}\}$ is a sequence
in the compact space $\mathfrak{M}(H^\infty)^\Delta$,
$\{L^{(m)}\}$ has adherent points. To obtain
\eqref{eq3.7}, it suffices to show that any adherent
point must equal $L$. Let $(L^{(m_\gamma)})$ be an
arbitrary subnet of $\{L^{(m)}\}$ converging to $L_1$
in $\mathfrak{M}(H^\infty)^\Delta$.  Precisely for
all $f$ in $B(\Delta)$,
\begin{equation*}
\lim_\gamma \, f\circ
L^{(m_\gamma)}(\zeta) \; =\; f\circ L_1(\zeta)
\end{equation*}
uniformly on compact subsets of $\Delta$. It follows from
\eqref{eq3.5} that
\begin{equation*}
q_k\circ L_1(\zeta) = \lim_\gamma\, q_k\circ
L^{(m_\gamma )}(\zeta) = q_k\circ L(\zeta).
\end{equation*}
This shows that $|(q_k\circ L_1)'(0)| = |a_1^{(k)}|
\to 1$ as $k\to \infty$. So we see that $L_1$ is an
analytic map from $\Delta$ to an analytic disc
$P(\phi_1)$ with $L_1(0) = \phi_1$.
Let $Z_1, \widehat{Z}_1$ and $\tau_1 = \widehat{Z}_1^{-1}$
be as above by Wermer's embedding theorem on $P(\phi_1)$.
Denote by $\mu_1$ the representing measure for $\phi_1$,
and put $b_n^{(k)}=\langle q_k, Z^{\,n}_1\rangle$ in
$L^2(\mu_1)$. Then $\{b_n^{(k)}\}$ is a bounded sequence
in $\ell^2$ and $q_k$ is represented as
\begin{equation*}
q_k(\xi)
= b_1^{(k)}\widehat{Z}_1(\xi) + b_2^{(k)}
\widehat{Z}_1(\xi)^2 + \cdots\,,
\qquad \xi \in P(\phi_1).
\end{equation*}
Observe that
\begin{equation*}
a_1^{(k)} = (q_k\circ L)'(0)\;=\; (q_k\circ L_1)'(0)
\;=\;b_1^{(k)}(Z_1\circ L_1)'(0).
\end{equation*}
Since $|a_1^{(k)}| \to 1$ and $|(Z_1\circ L_1)'(0)| \leq 1$,
we see that $|(Z_1\circ L_1)'(0)| =1$. So multiplying $Z_1$
by unimodular constant, we may assume that $Arg(a_1^{(k)}) 
= Arg(b_1^{(k)})$. Since $q_k\circ L(\zeta) = q_k\circ L_1(\zeta)$
by \eqref{eq3.5}, we have
\begin{equation*}
\sum_{n=1}^{\infty}\, a_n^{(k)} \zeta^n
\;=\; q_k\circ L(\zeta)
\;=\; q_k\circ L_1(\zeta)
\;=\;\sum_{n=1}^{\infty}\, b_n^{(k)} \zeta^n,
\qquad \zeta \in \Delta\,,
\end{equation*}
which implies that $a_n^{(k)} = b_n^{(k)}$ for all
$n = 1, 2, \cdots$. Especially, $a_1^{(k)} = b_1^{(k)}
\to e^{i\alpha}$ as $k \to\infty$.
Let us show that $L_1\,=\,L$ in $\mathfrak{M}
(H^\infty)^\Delta$. Indeed, let $f$ be a function
in $B(\Delta)$, and let $a_n= \langle f, Z^n_0\rangle$
in $L^2(\mu)$. Then $f$ is represented as \eqref{eq2.1}.
Since $||q_k||_\infty \leq 1$ and $\Vert e^{-i\alpha}
q_k - Z_0 \Vert_{L^2(\mu)} \to 0$, we see easily that
\begin{equation*}
f(\xi)
\;=\;\sum_{n=0}^{\infty}\, a_n \widehat{Z}_0(\xi)^n
\;=\;\lim_{k\to\infty} \sum_{n=0}^{\infty}\, a_n \,
(e^{-i\alpha} q_k)^n(\xi), \qquad \xi \in P(\phi).
\end{equation*}
Since $\{a_n\}$ is a sequence in $\ell^2$, the function
$g\,=\,\sum_{n=0}^{\infty}\, a_n\,Z_1^n$ lies in
$H^2(\mu_1)$. the closure of $B(X)$ in $L^2(\mu_1)$.
Since $b_1^{(k)} \to e^{-i\alpha}$, we see $\Vert
e^{-i\alpha}q_k - Z_1\Vert_{L^2(\mu_1)}\to 0$.
Although $g$ may not equal $f$ in $L^2(\mu_1)$,
we obtain
\begin{equation*}
f(\xi)
\;=\;g(\xi)
\;=\;\sum_{n=0}^{\infty}\, a_n \widehat{Z}_1(\xi)^n
\;=\;\lim_{k\to\infty} \;\sum_{n=0}^{\infty}\, a_n \,
(e^{-i\alpha} q_k)^n(\xi), \qquad \xi \in P(\phi_1).
\end{equation*}
This shows that
\begin{equation*}
f\circ L_1(\zeta)
\;=\;\sum_{n=0}^{\infty}\, a_n \zeta^n
\;=\; f\circ L(\zeta), \qquad \zeta \in \Delta,
\end{equation*}
for all $f$ in $B(\Delta)$,  Since any adherent point
$L_1$ of $\{L^{(m)}\}$ always equals $L$
in $\mathfrak{M}(H^\infty)^\Delta$, we obtain
the equality \eqref{eq3.7}, as desired.
\end{proof}

\medskip
Let us make some comments on Lemma \ref{lem3.4}. Since
$\{w_i^m\}$ may be replaced by any of its subsequence,
so there are a lot of adherent points being considered
as $L^{(m)}$. However, since the sequence
$\{L^{(m)}\}$ converges to $L$, it follows from
\cite[165p, Theorem]{H1} that all but a finite number
of the parts $P(\phi^{(m)}) = L^{(m)}(\Delta)$ lie in
the same fiber $\mathfrak{M}_1$.

\medskip
\begin{lem}
\label{lem3.5}
Let $\{w_i^m\}$ be as above. Then, for each $m$, there
is a subsequence of $\{c_i^m\}$ of $\{w_i^m\}$ satisfying
that the sequence,
\begin{equation}
\label{eq3.8}
\{c_1, c_2, \cdots \} \;=\; \bigcup_{m=1}^{\infty}\{c_i^m\}\,,
\end{equation}
is an interpolating sequence in $\Delta$.
\end{lem}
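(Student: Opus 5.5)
Since we are free to pass to subsequences of each $\{w_i^m\}$, I will build the $c_i^m$ inductively on $m$. Each new row will be taken so close to the unit circle, and so widely separated from everything already chosen, that the whole union (\ref{eq3.8}) is uniformly separated in Carleson's sense. Fix in advance numbers $\delta_m\in(0,1)$ with $\prod_{m=1}^\infty \delta_m^2 =\delta_0>0$; for instance take $\delta_m$ close enough to $1$ that $\sum_m(1-\delta_m)<\infty$ suitably. The aim is
\begin{equation*}
\prod_{j:\,c_j\neq c}\rho(c_j,c)\;\ge\;\delta_0 \qquad\text{for every } c \text{ in the union},
\end{equation*}
and this gives the interpolation property by the characterization recalled in the Introduction.

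The key tool is the observation after Lemma \ref{lem3.1}, quoted from \cite[Lemma 4.1]{T2}. If the zeros $\{z_k\}$ of a Blaschke product satisfy $\sum_k(1-|z_k|)<\delta(\varepsilon,\eta)$, then $|B|>1-\varepsilon$ on $|z|\le\eta$. I also use the elementary fact that a finite Blaschke product has modulus tending to $1$ as $|z|\to1$. Each row $\{w_i^m\}$ is thin with $|w_i^m|\to1$ (the points lie in $(w_\alpha)^*$, which converges to $\phi\notin\Delta$). By Lemma \ref{lem3.1} and its proof, any tail subsequence of a row with sufficiently fast decay of $1-|w_i^m|$ stays thin, and its internal product $\prod_{j\ne k}\rho(c^m_j,c^m_k)$ can be made $\ge\delta_m$ for all $k$. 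I expect a single subsequence to achieve this uniformly in $k$, not just asymptotically, once early terms are discarded, since thinness means the products tend to $1$.

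The induction runs as follows. Suppose rows $1,\dots,m-1$ have been chosen, each infinite but with $\sum(1-|c|)$ finite. I choose row $m$ as a subsequence $\{c_i^m\}$ of $\{w_i^m\}$ satisfying three conditions. First, its internal product is $\ge \delta_m$. Second, $\sum_i (1-|c_i^m|)<\delta(1-\delta_m,\eta_m)$, which makes the row-$m$ Blaschke product exceed $\delta_m$ in modulus on $|z|\le \eta_m$. Third, each point $c_i^m$ lies so close to the circle that the product over all earlier points $c$ of $\rho(c,c_i^m)$ exceeds $\delta_m$. The hard part is that the earlier rows are infinite, so the third condition cannot follow from finiteness alone. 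I handle it by diagonal bookkeeping: at stage $m$ only the first $N_m$ points of each earlier row are fixed, and the later points of earlier rows are chosen at later stages. Equivalently, I enumerate all rows together, choosing points one at a time in a diagonal order. At each step the new point is taken from its row far enough out that it is separated from all finitely many points already chosen with product $>\delta$ for a summable budget. Its $1-|c|$ is also taken small enough that it disturbs the previously chosen points' products by only a tiny factor; this uses the \cite{T2} estimate with $\eta$ exceeding the moduli of all previous points.

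The main obstacle is this simultaneous control: each point must see both the earlier points and all future points with product close to $1$. It is resolved by the standard bookkeeping. Assign to the $n$-th chosen point budgets $\varepsilon_n$ with $\prod(1-\varepsilon_n)>0$. When the $n$-th point is added, it must satisfy two requirements. It must have $\rho$-product $>1-\varepsilon_n$ against all earlier points, which holds for points close to the circle since a finite Blaschke product tends to modulus $1$ there. It must also keep $\rho(c_n,c_k)\ge 1-\varepsilon_k 2^{-n}$ for every $k<n$, which holds if $|c_n|$ is close enough to $1$. Then for any fixed $c_k$, the total product is at least $(1-\varepsilon_k)\prod_n(1-\varepsilon_k2^{-n})$, bounded below uniformly. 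Each row remains an infinite subsequence of $\{w_i^m\}$, hence still thin; the thinness of the row itself in fact comes for free from this construction. The union is therefore interpolating, as required.
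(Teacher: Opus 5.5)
Your proof is correct, but only its last paragraph does the work. You choose the points one at a time in a diagonal order that visits every row infinitely often, taking $|c_n|$ so close to $1$ that $\prod_{j<n}\rho(c_j,c_n)>1-\varepsilon_n$ and $\rho(c_n,c_k)\ge 1-\varepsilon_k2^{-n}$ for all $k<n$. This yields $\prod_{j\neq k}\rho(c_j,c_k)\ge(1-\varepsilon_k)^2\ge\bigl(\prod_n(1-\varepsilon_n)\bigr)^2>0$. Each row remains an infinite subsequence of $\{w_i^m\}$ because $|w_i^m|\to1$. That limit follows from the Blaschke condition for interpolating sequences, not from the convergence of the net. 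Three earlier ingredients are never used and should be deleted: the row-by-row induction with internal products $\ge\delta_m$, the unproved expectation that one subsequence achieves this uniformly in $k$, and the Blaschke product estimate quoted after Lemma \ref{lem3.1}.

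The paper uses the same diagonal enumeration of the rows but certifies interpolation differently. It transfers to the upper half-plane via \eqref{eq3.9} and only requires each new point to have at most half the height of the previous one, $y_{k+1}\le\frac12 y_k$. Separation and the Carleson-measure bound $\sum_{z_j\in Q}y_j\le 2\ell(Q)$ then follow from the geometric decay, and the separation-plus-Carleson-measure form of Carleson's theorem finishes. This selection rule is a single scalar condition that ignores horizontal positions and every earlier point except the last. It isolates the structural fact that geometrically decreasing heights alone force interpolation.

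Your route stays in $\Delta$. It uses only the product form of Carleson's condition recalled in the Introduction, plus the elementary fact that $\rho(\cdot,c)$ and finite Blaschke products tend to modulus $1$ at the circle. The cost is that each choice depends on all previously chosen points. In return it proves more: since $\varepsilon_k\to0$, your union is actually thin, which the paper's halving rule does not guarantee.
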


\medskip
By the conformal invariant character of interpolating sequences, we discuss the proof of Lemma \ref{lem3.5}
in the upper half plane $\mathbf{H}$. Let
\begin{equation}
\label{eq3.9}
z(w)\;= \;i\,\frac{1-w}{1+w}\,.
\end{equation}
Then $z=z(w)$ is a linear fractional transformation
of $\overline{\Delta} = \{\vert w\vert\leq 1\}$ onto
$\overline{\mathbf{H}} = \{Im \, w \geq 0\}$
with $z(1)= 0$. We need the following characterization
of interpolating sequences (\cite[Chapter VII,
Theorem 1.1]{Ga}): A sequence $\{z_j\}$ in $\mathbf{H}$
is an interpolating sequence if and only if the points
$z_j=x_j+iy_j$ is separated, that is,
\begin{equation*}
\rho(z_j,z_k)\; =\; \left\vert \frac{z_j-z_k}{z_j-\overline{z}_k}\right\vert
\;\geq\; a > 0 \,,\qquad k\neq j\,,
\end{equation*}
and there is a constant $A$ such that
for every square
$Q= \{x_0 \leq x \leq x_0+\ell(Q),\; 0<y \leq\ell(Q)\,\}$,
\begin{equation*}
\sum_{z_j \in Q}\, y_j \;\leq A \ell(Q)\,,
\end{equation*}
that is, $\sum y_j\delta_{z_j}$ is a Carleson measure
on $\mathbf{H}$.

\bigskip
\noindent
{\itshape Proof of Lemma 3.5.\/} For the above sequence
$\{w_i^m\}$, we put $z_i^m\,=\,z(w_i^m)$ by \eqref{eq3.9}.
Then for each $m$, $\{z_i^m\}$ is a thin interpolating
sequence in $\mathbf{H}$ such that $z_i^m=x_i^m+iy_i^m \rightarrow 0$, with $y_i^m > 0$. We now choose an
interpolating subsequence $\{z_i\}$ of
$\cup_{m=1}^\infty \{z_i^m\}$. Let $z_1 = z_1^1 = x_1^1+i y_1^1$
and $z_2 = z_{i_2}^1$ with $y_{i_2}^1 \leq \frac{1}{2}y_1^1$.
We then put $z_3 = z_{i_3}^2$ such that $y_{i_3}^2 \leq \frac{1}{2}y_{i_2}^1$ in $\{z^2_j\}$, and $z_4 = z_{i_4}^1$ such that $y_{i_4}^1 \leq \frac{1}{2}y_{i_3}^2$ in $\{z^1_j\}$. The later
elements $z_k=x_{i_k}^m+iy_{i_k}^m$ are defined recursively,
according to the following array
\begin{equation*}
\begin{matrix}
\{c^1_i\}: &z_1, &z_2, &z_4, &z_7, &z_{11},&\cdots&  & &\in \{z^1_i\}\\
\{c^2_i\}: & &z_3, &z_5, &z_8, &z_{12},&\cdots& &  &\in \{z^2_i\}\\
\{c^3_i\}: & & &z_6, &z_9, &z_{13},&\cdots& &  &\in \{z^3_i\}\\
\{c^4_i\}: & & & &z_{10}, &z_{14},&\cdots& &  &\in \{z^4_i\}\\
\{c^5_i\}: & & & & &z_{15} &\cdots & &  & \in \{z^5_i\}\\
 &\hdotsfor{8}
\end{matrix}
\end{equation*}
which have the following properties:

\begin{enumerate}
\renewcommand{\labelenumi}{(\alph{enumi})}

\medskip
\item Each $\{c^m_i\}$ is the thin interpolating sequence in
  $\Delta$ obtained by $c^m_i = w(z_k)$ with m-th row, where
  $z = w(z)$ is the inverse transformation of \eqref{eq3.9}.

\medskip
\item
  The sequence of the first elements of rows is
  the diagonal of the array, and if $z_k$ is such one
  of the $(m-1)$-th row, then $m$-th row begins $z_{k+m}$.

\medskip
\item
  For each $m$, $\{c^m_i\}$ is an infinite subsequence
  of $\{w_i^m\}$, so the role of $\{w_i^m\}$ in Lemma \ref
  {lem3.4} may be replaced by $\{c^m_i\}$.
\end{enumerate}
Let us show the sequence $\{c_i\}$ by \eqref{eq3.8} is an interpolation sequence. Recall that the $\{z_j=x_j+iy_j\}$
is separated if and only if $\vert z_j- z_k\vert\,\geq \,
b\, y_j$ for $k\neq j$, with $b>0$. Since $y_k < \frac{1}{2}y_j$ if $k>j$, $\{z_j\}$ satisfies this inequality with $b=\frac{1}{2}$. On the other hand, let
$Q= \{x_0 \leq x \leq x_0+\ell(Q),\; 0<y \leq\ell(Q)\,\}$.
For $\{z_j=x_j+iy_j\}$, the sequence $\{y_j\}$ is a decreasing
sequence with $\frac{1}{2}y_j > y_{j+1}$. Let $y_k$ be the
largest number with $y_j \leq \ell(Q)$. Then we easily
observe that
\begin{equation*}
\sum_{z_j \in Q}\, y_j
\;\leq \; \sum_{j=0}^\infty\,\left(\frac{1}{2}\right)^j y_k
\;\leq \; 2\,\ell(Q)\,.
\end{equation*}
Thus  $\{z_j\}$ is an interpolation sequence in
$\mathbf{H}$, so is $\{c_j\}$ in $\Delta$.
\hfill $\square$

\medskip
We are now ready for the proof of Theorem.

\medskip
\noindent
{\itshape Proof of Theorem.\/}
Lemmas \ref{lem3.3} and \ref{lem3.4} shows that (a) implies
(b). Since $\{w_i^m\}$ in Lemma \ref{lem3.4} may be replaced
by its subsequence, it follows from Lemma \ref{lem3.5} that
(b) implies (d). We next show that (d) implies (a). Let
$(c_\alpha)$ be a subnet of $\{c_i\}$ such that
$\lim_\alpha\,c_\alpha = \phi$ in $\mathfrak{M}(H^\infty)$.
Since $\{c_i\}$ is an interpolating sequence, the Hoffman
map $L = \lim_\alpha L_{c_\alpha}$ exists automatically
in $\mathfrak{M}(H^\infty)^\Delta$. Let $B$ be the
interpolating Blaschke product with zeros $\{c_i\}$. Then
we have
\begin{equation*}
\vert (B\circ L)'(0)\vert \;=\;
\lim_\alpha\, \vert (B\circ L_{c_\alpha}\,)'(0)\vert
\;=\; \lim_\alpha \,(1-|c_\alpha|^2)\,|B'(c_\alpha)|
\;\geq\; \delta \,>\,0.
\end{equation*}
So $B\circ L$ is a nonconstant analytic function on $\Delta$
with $B\circ L(0)=B(\phi)$. Since $P(\phi)$ contains
$L(\Delta)$ by \cite[Chapter X, Lemma 1.1]{Ga}, $P(\phi)$
is an analytic disc, thus (a) holds. On the other hand, it
follows from Lemma \ref{lem2.3} that (a) implies (c).
Conversely, suppose that (c) holds. Observe that the
measure
\begin{equation*}
d\widetilde{\mu}
\;=\; \frac{1-|\zeta|^2}{|F-\zeta|^2}\, d\mu
\end{equation*}
is mutually absolutely continuous with respect to $\mu$.
We show that $\widetilde{\mu}$ is a representing measure
for $B(\Delta)$. Since $F$ is an inner function, we see
that
\begin{equation*}
\frac{1-|\zeta|^2}{|F -\zeta|^2}
\;=\;\frac{1-|\zeta|^2}{|1-\overline{\zeta}\, F|^2}
\;=\; \sum_{n=1}^\infty \,\overline{\zeta}^n \,F^n
+ \sum_{n=0}^\infty \,\zeta^n \,\overline{F}^n\,.
\end{equation*}
For each $f$ in $H^1(\mu)$, we put $a_n(f) =
\int_X \,f \,\overline{F}^n\,d\mu$. We then fixe a
point $\zeta$ in $\Delta$ and define
\begin{equation}
\label{eq3.10}
\Phi_{\zeta}(f)
\;=\;\int_X f\, d\widetilde{\mu}
\;=\;\sum_{n=0}^\infty a_n(f)\,\zeta^n \,, \qquad
f \in H^1(\mu),
\end{equation}
which is a linear mapping. Since $H^2(\mu)=\mathcal{H} \oplus\mathcal{S}$, where $\mathcal{S}$ is orthogonal
to the system $\{F^{n}; \;n=0, \pm 1, \pm 2, \cdots\}$
in $L^2(\mu)$. This shows that $\Phi_{\zeta}(h)
\equiv 0$ on $\Delta$, for all $h$ in $\mathcal{S}$.
On the other hand, for
each $f$ in $H^2(\mu)$, there is a sequence $\{f_k\}$
in $B(\Delta)$ such that $||f_k - f||_{L^2(\mu)} \to 0$.
Since $\mathcal{S}$ is an invariant subspace, we observe
that $\Phi_{\zeta}(f_k h) \equiv 0$ on $\Delta$. It
follows from the Schwarz inequality that $a_n(f_k h) \to
a_n(fh)$, so $a_n(fh) = 0$ for all $k$. Then
\eqref{eq3.10} shows that $\Phi_{\zeta}(f h) \equiv 0$
on $\Delta$. Each $f$ in $H^2(\mu)$ has the form $f=g+h$
with $g$ in $\mathcal{H}$ and $h$ in $\mathcal{S}$. Since
$\zeta \to \Phi_{\zeta}(g) = \sum_{n=0}^\infty a_n(g)\,
\zeta^n$ is analytic on $\Delta$ and $g =
\sum_{n=0}^\infty a_n(g) F^n$, we see easily that
$\Phi_{\zeta}(g^2) = \Phi_{\zeta}(g)^2$. Then we have
\begin{equation*}
\Phi_{\zeta}(f^2)\;=\;\Phi_{\zeta}((g+h)^2)
\;=\;\Phi_{\zeta}(g^2)\;=\;\Phi_{\zeta}(g)^2
\;=\;\Phi_{\zeta}(f)^2,
\end{equation*}
from which we see that the probability measure
$\widetilde{\mu}$ is multiplicative on $H^2(\mu)$. Since
$\widetilde{\mu}$ is a representing measure for $B(\Delta)$
being different from $\mu$, $P(\phi)$ must be an analytic disc, so (a) holds. Thus the proof is complete.
\hfill $\square$

\bigskip
\section{Remarks}
\label{S2}
(a) Some properties on Gleason parts hold in more general
settings. Let $A$ be a uniform algebra on compact Hausdorff
space $Y$, and let $\mathfrak{M}(A)$ be the maximal ideal
space of $A$. Suppose that $\phi$ in $\mathfrak{M}(A)$ has
a unique representing measure $\mu$ on $Y$. If the Gleason
part $P(\phi)$ of $\phi$ is not a single point, then
Wermer's embedding theorem assures the existence of an
inner function $Z$ satisfying all of its properties. With 
this $Z$, the results in Section 2 hold as well. Moreover, 
by the same way as in the proof of Theorem, we obtain the 
equivalence of (a) and (c) of Theorem in these settings. It 
should be noted that if a Dirichlet algebra $A$ is contained 
in $C(Y)\cap \mathcal{H}(Z)$ on any analytic disc, then
$\mathcal{S}(Z) =\{0\}$. However, we do not know whether
this condition characterises Dirichlet algebras within 
logmodular algebras.

\medskip
(b) It would be useful to regard the inner function $Z$ as
a transformation of $X$ to $\mathbf{T}$. Let $\mathcal{C}$
be the class of all $Z^{-1}(E)$ for all Borel sets $E$ in
$\mathbf{T}$. Then $\mathcal{C}$ is a sub-$\sigma$-algebra
of all Borel sets $\mathcal{B}$ in $X$. For an $f$ in
$L^1(\mu) = L^1(X,\mathcal{B},\mu)$, $E(f|\mathcal{C})$
denotes the conditional expectation of $f$ with respect
to $\mathcal{C}$ (see \cite[Chapter 1, 1.4 D]{P}). On
$L^2(Y,\mathcal{B},\mu)$, $E(\cdot\,|\mathcal{C})$ is the
orthogonal projection $Pr_{\mathcal{C}}$ to the subspace
$L^2(Y,\mathcal{C},\mu)$. We notice that
$Pr_{\mathcal{C}}(H^2(\mu)) = \mathcal{H}(Z)$ and that
$E(\cdot\,|\mathcal{C})$ maps $L^\infty(Y,\mathcal{B},
\mu)$ to $L^\infty(Y,\mathcal{C},\mu)$. Recall that each
$f$ in $B(\Delta)$ has the form $f=g+h$ with $g$ in
$\mathcal{H}$ and $h$ in $\mathcal{S}$. Since $B(\Delta)$
is contained in $H^\infty(\mu)$, we observe that both $g$
and $h$ belong to $H^\infty(\mu)$. This enables us to
omit the approximation process from the proof of
Theorem.

\end{document}